\newtheorem{remark}{Remark}
\newtheorem{definition}{Definition}
\newtheorem{theorem}{Theorem}
\newtheorem{proposition}{Proposition}
\newtheorem{example}{Example}
\newcommand{\oomit}[1]{}
\def\BibTeX{{\rm B\kern-.05em{\sc i\kern-.025em b}\kern-.08em
    T\kern-.1667em\lower.7ex\hbox{E}\kern-.125emX}}
\begin{document}
\title{Reach-avoid Verification Based on Convex Optimization}
\author{Bai Xue$^{1,2}$, Naijun Zhan$^{1,2}$, Martin Fr\"anzle$^3$, Ji Wang$^4$ and Wanwei Liu$^4$
\thanks{1. State Key Lab. of Computer Science, Institute of Software Chinese Academy of Sciences, Beijing, China (\{xuebai,znj\}@ios.ac.cn) }
\thanks{2. University of Chinese Academy of Sciences, Beijing, China}
\thanks{3. Carl von Ossietzky Universit\"at Oldenburg, Oldenburg, Germany (martin.fraenzle@uni-oldenburg.de)}
\thanks{4. National University of Defense Technology, Changsha, China  (\{wj,wwliu\}@nudt.edu.cn; jiwang@ios.ac.cn)}
}

\maketitle

\begin{abstract}
In this paper we propose novel optimization-based methods for verifying reach-avoid (or, eventuality) properties of continuous-time systems modelled by ordinary differential equations. Given a system, an initial set, a safe set and a target set of states, we say that the reach-avoid property holds if for all initial conditions in the initial set, any trajectory of the system starting at them will eventually, i.e.\ in unbounded yet finite time, enter the target set while remaining inside the safe set until that first target hit. Based on a discount value function, two sets of quantified constraints are derived for verifying the reach-avoid property via the computation of exponential/asymptotic guidance-barrier functions (they form a barrier escorting the system to the target set safely at an exponential or asymptotic rate). It is interesting to find that one set of constraints whose solution is termed exponential guidance-barrier functions is just a simplified version of the existing one derived from the moment based method, while the other one whose solution is termed asymptotic guidance-barrier functions is completely new. Furthermore, built upon this new set of constraints, we derive a set of more expressive constraints, which includes the aforementioned two sets of constraints as special instances, providing more chances for verifying the reach-avoid properties successfully. When the involved datum are polynomials, i.e., the initial set, safe set and target set are semi-algebraic, and the system has polynomial dynamics, the problem of solving these sets of constraints can be framed as a semi-definite optimization problem using sum-of-squares decomposition techniques and thus can be efficiently solved in polynomial time via interior point methods. Finally, several examples demonstrate the theoretical developments and performance of proposed methods.
\end{abstract}

\begin{IEEEkeywords}
Ordinary Differential Equations; Reach-avoid Verification; Quantified Constraints
\end{IEEEkeywords}

\section{Introduction}
\label{Int}
Cyber-physical technology is integrated into an ever-growing range of physical devices and increasingly pervades our daily life \cite{lee2008cyber}. Examples of such systems range from  intelligent highway systems, to air traffic management systems, to computer and communication networks, to smart houses and smart supplies, etc. \cite{williams2003,braunl2008}. Many of the above-mentioned applications are safety-critical and require a rigorous guarantee of safe operation.

Among the many possible rigorous guarantees, verifying whether the system's dynamics (generally modelled by ordinary differential equations)  satisfy required properties is definitely in demand. Besides the more traditional properties such as stability and input-output performance \cite{khalil2002nonlinear}, properties of interest also includes safety and reachability. Safety verification answers the problem whether a system starting from any initial condition in some prescribed set cannot evolve to some unsafe region in the state space. On the other hand, reachability verification aims to show that for some or all initial conditions in some  prescribed set, the system will evolve to some target region in the state space \cite{prajna2007convex}. In existing literature, there are various extensions of safety and reachability such as reach-avoid properties to be verified. Reach-avoid verification is to verify weather all trajectories of the system starting from a specified initial set will enter a target set within a bounded time horizon or eventually while staying inside a safe set before the target hitting time. Since the reach-avoid property combines guarantees of safety by avoiding a set of unsafe states with the reachability property of reaching a target set and thus can formalize many important enginnering problems such as collision avoidance \cite{margellos2012} as well as target surveillance \cite{gillula2012}, its verification has turned out to be of fundamental importance in engineering. Mechanically providing mathematical guarantees for such scenarios is challenging but interesting due to practical applications mentioned above. 


One of the popular methods for the reach-avoid verification is computational reachability analysis, which involves the explicit computation of reachable states. It has been widely studied over the last three decades in several disciplines including control theory, computer science and applied mathematics \cite{franzle2019memory,althoff2021set}. In general, the exact computation of reach sets is impossible for dynamical and hybrid  systems \cite{Henzinger98,Zhan18}. Over-approximate reachability analysis is therefore studied in existing literature  for verification purposes (e.g., \cite{asarin2000}). Over-approximate reachabilty analysis computes an over-approximation (i.e., super-set) of the reach set based on set propagation techniques, which is an extension of traditional numerical methods for solving ordinary differential equations using set arithmetic rather than point arithmetic. If the computed over-approxiamtion does not intersect with unsafe states and is included in the target set, the reach-avoid property holds. Overly pessimistic over-approximations, however, render many properties unverifiable in practice, especially for large initial sets and/or large time horizons. This pessimism mainly arises due to the wrapping effect \cite{kulisch2001perspectives}, which is the propagation and accumulation of over-approximation errors through the iterative computation of reach sets. There are many techniques developed in existing literature for controlling the wrapping effect. One way is to use complex sets such as Taylor models \cite{berz1998verified,chen2012taylor} and polynomial zonotopes \cite{althoff2013reachability}  to over-approximate the reach set. On the other hand, as the extent of the wrapping effect correlates strongly with the size of the initial set, another way is to exploit subsets of the initial set for performing over-approximate reachability analysis via exploiting the (topological) structure of the system. For instance,  appropriate corner points of reach sets, called bracketing systems, are used in \cite{ramdani2009hybrid,eggers2015improving} to bound the complete reach sets when the systems under consideration are monotonic; \cite{xue2016reach} proposed the set-boundary reachability method for for continuous-time systems featuring a locally Lipschitz-continuous vector field, which just need to perform over-approximate reachability analysis of the initial set's boundary for the reach-avoid verification.

Another popular method is the optimization based method, which transforms the verification problem into a problem of determining the existence of solutions to a set of quantified constraints. This method avoids the explicit computation of reach sets and thus can handle verification with unbounded time horizons. A well-known method is the barrier certificate method, which was originally proposed in \cite{prajna2004safety,prajna2007framework} for safety verification of continuous and hybrid systems. The barrier certificate method was inspired by Lyapunov functions in control theory and relies on the computation of barrier certificates, which are a function of state satisfying a set of quantified inequalities on both the function itself and its Lie derivative along the flow of the system. In the state space, the zero level set of a barrier certificate separates an unsafe region from all system trajectories starting from a set of legally initial states and thus the existence of such a function provides an exact certificate/proof of system safety. Afterwards, a number of different kinds of barrier certificates were developed such as exponential barrier certificates and vector barrier certificates in the literature \cite{kong2013exponential,sogokon2018vector,dai2017barrier,bak2018t}, which mainly differ in their expressiveness. This method was also extended to reachability verification of continuous and hybrid systems. For instance, it was extended to the reach-avoid verification in  \cite{prajna2007convex}.

In this paper we study the reach-avoid verification problem of continuous-time systems modelled by ordinary differential equations in the framework of the optimization based method. The reach-avoid verification problem of interest is that given an initial set, a safe set and a target set, we verify whether any trajectory starting from the initial set will eventually enter the target set while remaining inside the safe set until the first target hit. The reach-avoid verification problem in our method is transformed into a problem of searching for so called guidance-barrier functions. The reason that we term guidance-barrier function is that the boundary of its certain sub-level set forms a barrier, escorting the system to the target set safely. Based on a discount value function, whose certain (sub) level set equals the set of all initial states enabling the satisfaction of reach-avoid properties, with the discount factor being larger than and equal to zero we first respectively derive two sets of quantified constraints whose solutions are termed exponential and asymptotic guidance barrier functions. If a solution to any of these two sets of constraints is found, the reach-avoid property is guaranteed. Based on the set of constraints associated with asymptotic guidance-barrier functions, we further construct a set of  more expressive constraints, which admits more solutions and formulates the aforementioned two sets of constraints as its special instances, and thus offers more possibilities of verifying the reach-avoid property successfully. When the datum are polynomials, i.e., the system has polynomial dynamics, the initial set, target set and safe set are semi-algebraic sets, the problem of solving these constraints can be reduced to a semi-definite programming problem which can be solved efficiently in polynomial time via interior point methods. Finally, several examples are provided to demonstrate theoretical developments of the proposed methods, and the experiment results show that they outperform existing ones.

The main contributions of this work are summarized below.
\begin{enumerate}
    \item A novel unified framework is proposed for the reach-avoid verification of continuous-time systems modelled by ordinary differential equations such that the verification problem is transformed into a problem of searching for exponential/asymptotic guidance-barrier functions. In this framework, two sets of quantified inequalities are derived based on a discount value function. One is a simplified version of the existing one from the moment based method in \cite{korda2014controller} however, the other one is completely new. 
    \item The differences and respective benefits of the aforementioned two sets of constraints are discussed in detail. Based on these discussions, a set of more expressive constraints is further developed such that the aforementioned two sets of constraints are its special cases.
\end{enumerate}

\subsection*{Related Work}
Among the many possible extensions beyond reachability analysis, reach-avoid analysis has turned out to be of fundamental importance in engineering, as it can formalize many important engineering problems such as collision avoidance \cite{margellos2012} and target surveillance \cite{gillula2012}. The reach-avoid problem comes in the two variants of computing a reach-avoid set and of verifying reach-avoid properties for systems featuring a given initial state set \cite{xue2016reach}. A reach-avoid set, also known as the capture basin in viability theory \cite{aubin2001viability}, is the maximal set of initial states such that a system starting from them is guaranteed to (eventually or within a given time horizon) reach a given target set while avoiding a given unsafe set till the target hit. The computation of reach-avoid sets is closely related to the reach-avoid verification, especially the computation of their under-approximations (i.e., subsets). If the given initial set is included in the computed under-approximation, it is evident that the reach-avoid property holds.  

\cite{prajna2007convex} presented a set of quantified constraints for the reach-avoid verification of continuous-time systems. One of constraints is that the Lie derivative of the computed function along the flow of the system is strictly negative over a set which is the closure of safe set minus target set. This condition is strict and thus limits its application. For instance, it is impossible for this constraint to be satisfied when the system has equilibrium states in this set. The derived constraints in the present work remedy this shortcoming.

On the other hand, the problem of computing reach-avoid sets of continuous-time systems has been investigated in the Hamilton-Jacobi reachability framework, e.g., \cite{mitchell2000level,bokanowski2010,margellos2011,fisac2015,fisac2015pursuit}, which links reach-avoid sets with viscosity solutions to Hamilton-Jacobi equations and finally reduces the problem of computing reach-avoid sets to the problem of addressing Hamilton-Jacobi equations. However, traditional numerical methods for solving Hamilton-Jacobi equations require gridding the state space such that the computed reach-avoid set is just an approximation, neither an over-approximation nor under-approximation, which cannot be used to reason formally on the system. More recently, via relaxing Hamilton-Jacobi equations, a method exploiting semi-definite programming for under-approximating reach-avoid sets has been suggested in \cite{xue2019}. Besides, moment-based programming methods were proposed for over- as well as under-approximating reach-avoid sets in \cite{henrion2013convex,korda2013inner,shia2014convex,majumdar2014,zhao2017control}. All aforementioned works are confined in that they compute reach-avoid sets over specified bounded time horizons and thus cannot be used to study the reach-avoid verification problem in this work. Studies on computing under-approximations of reach-avoid sets over unbounded time horizons are rare by the best of our knowledge, with only moment-based programming methods attempting to address them \cite{korda2014controller}. The constraints developed in the present work include the ones in \cite{korda2014controller} as a special case and thus have more capabilities for the reach-avoid verification.

A set of quantified constraints, derived by reduction from a pertinent set of equations, was recently derived in \cite{xue2020inner}. It addresses the computation of under-approximations of reach-avoid sets for \emph{discrete-time systems}. The present work is an extension of the work \cite{xue2020inner} via following the line of constructing quantified constraints. However, the construction is completely different, since the value function in \cite{xue2020inner} relies on a long-run average cost, which is constructed based on the polynomial function defining the target set. Such a characterization is not applicable to continuous-time systems, due to different topological dynamics between continuous- and discrete-time systems. For example, if trajectories for discrete-time systems starting from states outside a target set enter the target set, they may not touch the boundary of the target set. However, this assertion does not hold for continuous-time systems. Consequently, we define a value function of a new form, which is built upon the indicator function of the target set and a discount factor, and this new value function facilitates the gain of several sets of constraints for the reach-avoid verification of continuous-time systems.

This paper is structured as follows. In Section \ref{pre}, basic notions used throughout this paper and the reach-avoid verification problem of interest are introduced. Then our methods for the reach-avoid verification are elucidated in Section \ref{inner_app}. After demonstrating the performance of proposed methods on a series of examples in Section \ref{ex}, we conclude this paper in Section \ref{con}.

\section{Preliminaries}
\label{pre}
In this section we formally present the concepts of continuous-time systems and reach-avoid verification problem of interest in this paper. Before formulating them, let us introduce some basic notions used throughout this paper: for a function $v(\bm{x})$, $\bigtriangledown_{\bm{x}}v(\bm{x})$ denotes its gradient with respect to $\bm{x}$; $\mathbb{R}_{\geq 0}(\mathbb{R}_{>0})$ stands for the set of nonnegative (positive) real values in $\mathbb{R}$ with $\mathbb{R}$ being the set of real numbers; the closure of a set $\mathcal{X}$ is denoted by $\overline{\mathcal{X}}$, the complement by $\mathcal{X}^c$ and the boundary by $\partial \mathcal{X}$; the ring of all multivariate polynomials in a variable $\bm{x}$ is denoted by $\mathbb{R}[\bm{x}]$; vectors are denoted by boldface letters, and the transpose of a vector $\bm{x}$ is denoted by $\bm{x}^{\top}$; $\sum[\bm{x}]$ is used to represent the set of sum-of-squares polynomials over variables $\bm{x}$, i.e., \[\sum[\bm{x}]=\{p\in \mathbb{R}[\bm{x}]\mid p=\sum_{i=1}^{k} q_i^2,q_i\in \mathbb{R}[\bm{x}],i=1,\ldots,k\}.\]
All of semi-definite programs in this paper are formulated using Matlab package YALMIP \cite{lofberg2004} and solved by employing the academic version of the semi-definite programming solver MOSEK \cite{mosek2015mosek}, and the computations were performed on an i7-P51s 2.6GHz CPU with 32GB RAM running Windows 10 and MATLAB2020a.

\subsection{Preliminaries}

The continuous-time system of interest (or, \textbf{CTolS}) is a system whose dynamics are described by an ODE of the following form:
\begin{equation}
\label{sys}
\dot{\bm{x}}=\bm{f}(\bm{x}),
\bm{x}(0)=\bm{x}_0\in \mathbb{R}^n,
\end{equation}
where $\dot{\bm{x}}=\frac{d \bm{x}(t)}{d t}$ and $\bm{f}(\bm{x})=(f_1(\bm{x}),\ldots,f_n(\bm{x}))^{\top}$ with $f_i(\bm{x})\in \mathbb{R}[\bm{x}]$.

We denote the trajectory of system \textbf{CTolS} that originates from $\bm{x}_0 \in \mathbb{R}^n$ and is defined over the maximal time interval $[0,T_{\bm{x_0}})$ by $\bm{\phi}_{\bm{x}_0}(\cdot):[0,T_{\bm{x_0}})\rightarrow \mathbb{R}^n$. Consequently,
\[\bm{\phi}_{\bm{x}_0}(t):=\bm{x}(t), \forall t\in [0,T_{\bm{x_0}}), \text{ and } \bm{\phi}_{\bm{x}_0}(0) = \bm{x}_0,\] where $T_{\bm{x}_0}$ is either a positive value (i.e., $T_{\bm{x_0}}\in \mathbb{R}_{>0}$) or $\infty$.

Given a bounded safe set $\mathcal{X}$, an initial set $\mathcal{X}_0$ and a target set $\mathcal{X}_r$, where 
\begin{equation*}
\label{set}
\begin{split}
&\mathcal{X}=\{\bm{x}\in \mathbb{R}^n\mid h(\bm{x})<0\} \text{~with~} \partial \mathcal{X}=\{\bm{x}\in \mathbb{R}^n\mid h(\bm{x})=0\},\\
&\mathcal{X}_0=\{\bm{x}\in \mathbb{R}^n\mid l(\bm{x})<0\}, \text{~and~}\\
&\mathcal{X}_r=\{\bm{x}\in \mathbb{R}^n\mid g(\bm{x})<0\}
\end{split}
\end{equation*}
with $l(\bm{x}), h(\bm{x}),g (\bm{x})\in \mathbb{R}[\bm{x}]$, and $\mathcal{X}_0\subseteq \mathcal{X}$ and $\mathcal{X}_r\subseteq \mathcal{X}$, the reach-avoid property of interest is defined as follows.
\begin{definition}[Reach-Avoid Property]\label{RNS}
Given system \textbf{CTolS} with the safe set $\mathcal{X}$, initial set $\mathcal{X}_0$ and target set $\mathcal{X}_r$, we say that the reach-avoid property holds if for all initial conditions $\bm{x}_0\in \mathcal{X}_0$, any trajectory $\bm{x}(t)$ of system \textbf{CTolS} starting at $\bm{\phi}_{\bm{x}_0}(0)=\bm{x}_0$ satisfies \[\bm{\phi}_{\bm{x}_0}(T)\in \mathcal{X}_r \bigwedge \forall t\in [0,T]. \bm{\phi}_{\bm{x}_0}(t)\in \mathcal{X}\] for some $T>0$.
\end{definition}

The problem of interest in this work is the reach-avoid verification, i.e., verifying that system \textbf{CTolS} satisfies the reach-avoid property in Definition \ref{RNS}. We attempt to solve this problem within the framework of optimization based methods. Generally, such methods are sound but incomplete.

It is  worth remarking here that the proposed methods in the subsequent sections can be extended straightforwardly to the case with the initial set $\mathcal{X}_0$ and target set $\mathcal{X}_r$ being a finite union of multiple sets, i.e., $\mathcal{X}=\cup_{i=1}^{i_1} \mathcal{X}_i$, $\mathcal{X}_0=\cup_{k=1}^{k_1} \mathcal{X}_{0,k}$,  and $\mathcal{X}_r=\cup_{j=1}^{j_1} \mathcal{X}_{r,j}$, where $\mathcal{X}_i=\{\bm{x}\in \mathbb{R}^n \mid h_i(\bm{x})<0\}$, $\mathcal{X}_{0,k}=\{\bm{x}\in \mathbb{R}^n \mid l_{k}(\bm{x})<0\}$ and $\mathcal{X}_{r,j}=\{\bm{x}\in \mathbb{R}^n \mid g_{j}(\bm{x})<0\}$, via using $h(\bm{x})=\max_{i\in \{1,\ldots,i_1\}} h_i(\bm{x})$, $l(\bm{x})=\max_{k\in \{1,\ldots,k_1\}} l_k(\bm{x})$  and $g(\bm{x})=\max_{j\in \{1,\ldots,j_1\}} g_j(\bm{x})$. For ease of exposition, we mainly take $i_1=k_1=j_1=1$ in the sequel, if not explicitly stated otherwise.

\subsection{Existing Methods}
For the convenience of comparisons, in this subsection we recall two sets of quantified constraints in existing literature for verifying the reach-avoid property in Definition \ref{RNS}. The first one is from \cite{prajna2007convex}, while the other one is from \cite{korda2014controller}.

\begin{proposition} \cite{prajna2007convex}
 Suppose that there exists a continuously differentiable function $v(\bm{x}): \overline{\mathcal{X}}\rightarrow \mathbb{R}$ satisfying 
\begin{align}
&v(\bm{x})\leq 0, \forall \bm{x}\in \mathcal{X}_0\label{041}  \\
&v(\bm{x})>0, \forall \bm{x}\in \overline{\partial \mathcal{X}\setminus \partial \mathcal{X}_r},\label{042} \\
&\bigtriangledown_{\bm{x}}v(\bm{x}) \cdot \bm{f}(\bm{x})<0, \forall \bm{x}\in \overline{\mathcal{X}\setminus \mathcal{X}_r},\label{043}
\end{align}
Then the reach-avoid property in Definition \ref{RNS} holds.
\end{proposition}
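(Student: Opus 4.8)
The plan is to run a barrier-type argument along individual trajectories, exploiting that $v$ is forced to strictly decrease in the region one must traverse on the way to the target. Fix an initial point $\bm{x}_0\in\mathcal{X}_0$ and abbreviate the trajectory by $\bm{\phi}(t):=\bm{\phi}_{\bm{x}_0}(t)$ and the composite scalar signal by $V(t):=v(\bm{\phi}(t))$. By the chain rule $\dot V(t)=\bigtriangledown_{\bm{x}}v(\bm{\phi}(t))\cdot\bm{f}(\bm{\phi}(t))$ wherever the trajectory is defined, so constraint~\eqref{043} gives $\dot V(t)<0$ as long as $\bm{\phi}(t)\in\overline{\mathcal{X}\setminus\mathcal{X}_r}$, while constraint~\eqref{041} fixes the initial value $V(0)=v(\bm{x}_0)\le 0$. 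The first step is therefore to observe that, for every $t$ in the interval on which the trajectory stays inside $\mathcal{X}$ but has not yet entered $\mathcal{X}_r$, the point $\bm{\phi}(t)$ lies in $\mathcal{X}\setminus\mathcal{X}_r\subseteq\overline{\mathcal{X}\setminus\mathcal{X}_r}$, so that $V$ is strictly decreasing there and $V(t)\le V(0)\le 0$.

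Next I would establish safety, namely that the trajectory cannot leave $\mathcal{X}$ before reaching $\mathcal{X}_r$. Arguing by contradiction, suppose $\tau>0$ is the first time the trajectory meets $\partial\mathcal{X}$ and that $\mathcal{X}_r$ has not been hit on $[0,\tau]$. Then $\bm{\phi}(t)\in\mathcal{X}\setminus\mathcal{X}_r$ for $t\in[0,\tau)$, so by continuity $\bm{\phi}(\tau)\in\overline{\mathcal{X}\setminus\mathcal{X}_r}$; since $\bm{\phi}(\tau)\in\partial\mathcal{X}$ lies on the portion of the boundary not belonging to $\partial\mathcal{X}_r$, constraint~\eqref{042} yields $V(\tau)=v(\bm{\phi}(\tau))>0$. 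This contradicts $V(\tau)\le V(0)\le 0$ obtained from the monotonicity of the previous step. Hence the trajectory remains inside $\mathcal{X}$ up to, and including, the first instant at which it reaches $\mathcal{X}_r$.

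It then remains to prove that this first target-hitting time is finite, and here boundedness of $\mathcal{X}$ enters crucially. The set $K:=\overline{\mathcal{X}\setminus\mathcal{X}_r}$ is compact, so by~\eqref{043} the continuous map $\bm{x}\mapsto\bigtriangledown_{\bm{x}}v(\bm{x})\cdot\bm{f}(\bm{x})$ attains a maximum $-\gamma<0$ on $K$, and $v$ is bounded below on $K$, say by $m$. Suppose toward a contradiction that the trajectory never enters $\mathcal{X}_r$; by the safety step it then stays in the compact set $K$ for all time, which simultaneously guarantees global existence of the solution, since confinement to a compact set rules out finite-time escape. Consequently $\dot V(t)\le-\gamma$ for all $t\ge 0$, whence $V(t)\le V(0)-\gamma t\to-\infty$, contradicting $V(t)\ge m$. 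Therefore the trajectory enters $\mathcal{X}_r$ at some finite time; taking $T$ to be (slightly beyond) that time, which is legitimate because $\mathcal{X}_r$ is open, yields $\bm{\phi}(T)\in\mathcal{X}_r$ with $\bm{\phi}(t)\in\mathcal{X}$ for all $t\in[0,T]$, establishing the reach-avoid property.

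I expect the delicate point to be the safety step, specifically the claim that the exit point $\bm{\phi}(\tau)$ lies on $\overline{\partial\mathcal{X}\setminus\partial\mathcal{X}_r}$ rather than on $\partial\mathcal{X}_r$, so that~\eqref{042} is applicable; this rests on the geometric fact that, because $\mathcal{X}_r\subseteq\mathcal{X}$ with $\mathcal{X}$ open, a first exit from $\mathcal{X}$ occurring while the state is still outside $\mathcal{X}_r$ must take place on the part of $\partial\mathcal{X}$ away from $\partial\mathcal{X}_r$. A secondary technical matter is justifying global existence of the trajectory on $[0,T]$, which I would derive from its confinement to the compact set $K$ rather than from any a priori control on $T_{\bm{x}_0}$.
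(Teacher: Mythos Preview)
The paper does not supply its own proof of this proposition; it is quoted verbatim as a known result from \cite{prajna2007convex} and left unproved. Your argument is the standard barrier-certificate proof and is correct: strict decrease of $v$ along trajectories in $\overline{\mathcal{X}\setminus\mathcal{X}_r}$, together with $v(\bm{x}_0)\le 0$ and $v>0$ on $\overline{\partial\mathcal{X}\setminus\partial\mathcal{X}_r}$, yields safety by contradiction, and compactness of $\overline{\mathcal{X}\setminus\mathcal{X}_r}$ turns the strict inequality in~\eqref{043} into a uniform negative upper bound on $\dot V$, forcing finite-time entry into $\mathcal{X}_r$. The boundary subtlety you flag---that the first exit point through $\partial\mathcal{X}$ must lie in $\overline{\partial\mathcal{X}\setminus\partial\mathcal{X}_r}$---is the only place needing care, and your justification via $\bm{\phi}(\tau)\in\overline{\mathcal{X}\setminus\mathcal{X}_r}\cap\partial\mathcal{X}$ is adequate under the paper's standing assumption $\mathcal{X}_r\subseteq\mathcal{X}$ with $\mathcal{X}$ open.
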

One of the drawbacks of constraints \eqref{041}-\eqref{043} in the reach-avoid verification is that it cannot deal with the case with an equilibrium being inside $\overline{\mathcal{X}\setminus \mathcal{X}_r}$, since  $\bm{f}(\bm{x}_0)=0$ implies $\bigtriangledown_{\bm{x}}v(\bm{x}) \cdot \bm{f}(\bm{x})\mid_{\bm{x}=\bm{x}_0}=0$.

Besides, if the reach-avoid property in Definition \ref{RNS} holds, the initial set $\mathcal{X}_0$ must be a subset of the reach-avoid set $\mathcal{RA}$, which is the set of all initial states guaranteeing the satisfaction of the reach-avoid property, i.e.
\begin{equation*}
\label{ras}
\mathcal{RA}=\left\{\bm{x}_0 \in \mathbb{R}^n \middle|\;
\begin{aligned}
&\exists t \in  \mathbb{R}_{\geq 0}. \bm{\phi}_{\bm{x}_0}(t)\in \mathcal{X}_r \\
&\bigwedge \forall \tau \in [0,t]. \bm{\phi}_{\bm{x}_0}(\tau) \in \mathcal{X}
\end{aligned}
\right\}.
\end{equation*}
Therefore, the method for computing under-approximations of the reach-avoid set $\mathcal{RA}$ can be used for the reach-avoid verification. By adding the condition \[v(\bm{x})< 0, \forall \bm{x}\in \mathcal{X}_0\] into constraint \cite[(18)]{korda2014controller}, which is originally developed for under-approximating the reach-avoid set $\mathcal{RA}$, we can obtain a set of quantified constraints as shown in Proposition \ref{korda2014} for the reach-avoid verification. 
\begin{proposition} 
\label{korda2014}
 Suppose that there exists a continuously differentiable function $v(\bm{x}): \overline{\mathcal{X}}\rightarrow \mathbb{R}$ and a continuous function $w(\bm{x}): \overline{\mathcal{X}}\rightarrow \mathbb{R}$  satisfying 
\begin{align}
&v(\bm{x})< 0, \forall \bm{x}\in \mathcal{X}_0\label{0411} \\
&\bigtriangledown_{\bm{x}}v(\bm{x}) \cdot \bm{f}(\bm{x})\leq \beta v(\bm{x}), \forall \bm{x}\in \overline{\mathcal{X}\setminus \mathcal{X}_r},\label{0421}\\
&w(\bm{x})\geq 0, \forall \bm{x}\in \overline{\mathcal{X}\setminus \mathcal{X}_r}, \label{0431}\\
&w(\bm{x})\geq v(\bm{x})+1, \forall \bm{x}\in \overline{\mathcal{X}\setminus \mathcal{X}_r}, \label{0441}\\
&v(\bm{x})\geq 0, \forall \bm{x}\in \partial \mathcal{X}, \label{0451}
\end{align}
where $\beta>0$ is a user-defined value, then the reach-avoid property in Definition \ref{RNS} holds.
\end{proposition}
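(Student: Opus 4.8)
The plan is to argue directly on trajectories, using the differential inequality \eqref{0421} as a comparison (Gr\"onwall) estimate and the boundedness of the safe set $\mathcal{X}$ to force a finite target-hitting time. Fix an initial condition $\bm{x}_0\in\mathcal{X}_0$ and consider $\bm{\phi}_{\bm{x}_0}(\cdot)$. Let $\tau\in[0,\infty]$ be the first time the trajectory leaves the closed set $\overline{\mathcal{X}\setminus\mathcal{X}_r}$; since $\bm{x}_0\in\mathcal{X}_0\subseteq\mathcal{X}$ and, by \eqref{0411}, $v(\bm{x}_0)<0$, the trajectory starts inside this set, so $\tau$ is well defined. On $[0,\tau)$ the trajectory remains in $\overline{\mathcal{X}\setminus\mathcal{X}_r}$, so \eqref{0421} applies pointwise and, writing $u(t):=v(\bm{\phi}_{\bm{x}_0}(t))$, gives $\dot{u}(t)\le\beta u(t)$. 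Multiplying by $e^{-\beta t}$ and integrating yields the comparison bound $u(t)\le v(\bm{x}_0)\,e^{\beta t}$ for all $t\in[0,\tau)$. Because $v(\bm{x}_0)<0$ and $\beta>0$, this shows both that $v(\bm{\phi}_{\bm{x}_0}(t))<0$ throughout $[0,\tau)$ and that $u(t)\to-\infty$ as $t\to\infty$.

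Next I would establish $\tau<\infty$ by a compactness argument. As long as the trajectory stays in $\overline{\mathcal{X}\setminus\mathcal{X}_r}\subseteq\overline{\mathcal{X}}$, which is compact because $\mathcal{X}$ is bounded, there is no finite escape time, and $v$ is bounded below there by $m:=\min_{\bm{x}\in\overline{\mathcal{X}}}v(\bm{x})>-\infty$. If $\tau=\infty$, the comparison bound would force $u(t)\le v(\bm{x}_0)e^{\beta t}\to-\infty$, contradicting $u(t)\ge m$. Hence the trajectory must leave $\overline{\mathcal{X}\setminus\mathcal{X}_r}$ at some finite $\tau$, and by continuity $\bm{\phi}_{\bm{x}_0}(\tau)\in\overline{\mathcal{X}\setminus\mathcal{X}_r}$ with $v(\bm{\phi}_{\bm{x}_0}(\tau))\le v(\bm{x}_0)e^{\beta\tau}<0$.

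The decisive step is to show that this exit happens through the target rather than through the safe-set boundary. Since $\tau$ is a first-exit time, $\bm{\phi}_{\bm{x}_0}(\tau)$ lies on the topological boundary of $\overline{\mathcal{X}\setminus\mathcal{X}_r}$, and because $\overline{\mathcal{X}\setminus\mathcal{X}_r}\subseteq\{h\le0,\,g\ge0\}$ with $\partial\mathcal{X}=\{h=0\}$, that boundary is contained in $\{h=0\}\cup\{g=0\}$. The case $h(\bm{\phi}_{\bm{x}_0}(\tau))=0$ is excluded by \eqref{0451}, which gives $v\ge0$ on $\partial\mathcal{X}$, contradicting $v(\bm{\phi}_{\bm{x}_0}(\tau))<0$. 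Therefore $h(\bm{\phi}_{\bm{x}_0}(\tau))<0$ and $g(\bm{\phi}_{\bm{x}_0}(\tau))=0$: the point sits in the interior of $\mathcal{X}$ and, being about to leave $\{g\ge0\}$, enters $\mathcal{X}_r=\{g<0\}$ immediately afterwards, so $\bm{\phi}_{\bm{x}_0}(T)\in\mathcal{X}_r$ for some $T>\tau$. Finally, $v<0$ on all of $[0,\tau]$ keeps the trajectory off $\partial\mathcal{X}$, so, starting from the open set $\mathcal{X}$, it stays in $\mathcal{X}$ on $[0,T]$ by continuity, and the reach-avoid property of Definition \ref{RNS} follows.

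I expect the boundary bookkeeping of the third paragraph to be the main obstacle: one must argue carefully that the first exit from $\overline{\mathcal{X}\setminus\mathcal{X}_r}$ cannot occur at an interior point, that the trajectory genuinely enters the open target $\{g<0\}$ rather than merely grazing $\partial\mathcal{X}_r$, and that the strict negativity of $v$ precludes touching $\partial\mathcal{X}$ at every earlier instant, not just at $\tau$. It is worth noting that constraints \eqref{0431}--\eqref{0441} on $w$ play no role in this trajectory argument; they are inherited from the moment-based under-approximation formulation of \cite{korda2014controller}, where $w$ dominates the indicator of the reach-avoid set, and they may be dropped for the purpose of verification.
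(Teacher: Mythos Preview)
The paper does not supply a proof of this proposition: it is presented as an adaptation of a result from \cite{korda2014controller}, and the closely related Proposition~\ref{inequality1} (the sign-reversed simplification \eqref{con3110}--\eqref{con511}) is likewise left unproved, the paper remarking only that it follows from the present statement by dropping \eqref{0431}--\eqref{0441} and replacing $v$ by $-v$. Your Gr\"onwall-plus-compactness argument is correct and is precisely the mechanism implicit in the paper's proofs of the neighbouring results (Theorem~\ref{equa2} in particular uses the same exponential comparison along trajectories). Your closing observation that the constraints \eqref{0431}--\eqref{0441} on $w$ are superfluous for verification agrees verbatim with the paper's own remark. The boundary bookkeeping you flag is the only delicate spot, but your outline handles it: once $h(\bm{\phi}_{\bm{x}_0}(\tau))<0$ is secured via \eqref{0451}, openness of $\mathcal{X}$ together with the existence of times $t_n\downarrow\tau$ with $\bm{\phi}_{\bm{x}_0}(t_n)\notin\overline{\mathcal{X}\setminus\mathcal{X}_r}$ forces $\bm{\phi}_{\bm{x}_0}(t_n)\in\mathcal{X}_r$ for large $n$, and choosing $T=t_n$ close enough to $\tau$ keeps the entire segment $[0,T]$ inside $\mathcal{X}$. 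One small omission: you should dispose of the trivial case $\bm{x}_0\in\mathcal{X}_r$ separately, since then $\bm{x}_0$ need not lie in $\overline{\mathcal{X}\setminus\mathcal{X}_r}$ and $\tau$ as you define it could be zero.
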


\section{Reach-avoid Verification}
\label{inner_app}
This section presents our optimization based methods for the reach-avoid verification. Based on a discount value function, which is defined based on trajectories of a switched system and introduced in Subsection \ref{ISS}, two sets of quantified constraints are first respectively derived when the discount factor is respectively equal to zero and larger than zero. Once a solution (termed exponential or asymptotic guidance-barrier function) to  any of these two sets of constraints is found, the reach-avoid property in Definition \ref{RNS} is verified successfully. Furthermore, inspired by the set of constraints obtained when the discount factor is zero, a set of more expressive constraints is constructed for the reach-avoid verification.

\subsection{Induced Switched Systems}
\label{ISS}
This subsection introduces a switched system, which is built upon system \textbf{CTolS}. This switched system is constructed by requiring the state of system \textbf{CTolS} to stay still  when the complement of the safe set $\mathcal{X}$ is reached. For the sake of brevity, only trajectories of the induced switched system, also called \textbf{CSPS}, are introduced.

\begin{definition}
\label{tra}
Given system \textbf{CSPS} with an initial state $\bm{x}_0\in  \overline{\mathcal{X}}$, if there is a function $\bm{x}(\cdot): \mathbb{R}_{\geq 0}\rightarrow \mathbb{R}^n$ with $\bm{x}(0)=\bm{x}_0$ such that it satisfies the dynamics defined by $\dot{\bm{x}}=\widehat{\bm{f}}(\bm{x})$,
where 
\begin{equation}
\label{path_1}  
\widehat{\bm{f}}(\bm{x}): =1_{\mathcal{X}}(\bm{x})\cdot \bm{f}(\bm{x}),
  \end{equation}
 with $1_{\mathcal{X}}(\cdot): \overline{\mathcal{X}}\rightarrow \{0,1\}$ representing the indicator function of the set $\mathcal{X}$,  i.e.,
\[1_{\mathcal{X}}(\bm{x}):=\begin{cases}
   1, \quad \text{if }\bm{x}\in \mathcal{X},\\
   0, \quad \text{if }\bm{x}\notin \mathcal{X},
\end{cases}\]
then the trajectory $\widehat{\bm{\phi}}_{\bm{x}_0}(\cdot):\mathbb{R}_{\geq 0}\rightarrow \mathbb{R}^n$, induced by $\bm{x}_0$, of system \textbf{CSPS} is defined as follows:
\[\widehat{\bm{\phi}}_{\bm{x}_0}(t):=\bm{x}(t), \forall t\in \mathbb{R}_{\geq 0}.\]
\end{definition}

It is observed that the set $\overline{\mathcal{X}}$ is an invariant set for system \textbf{CSPS}. Also, if $\bm{x}_0\in \mathcal{X}$ and there exists $T\geq \mathbb{R}_{\geq 0}$ such that $\widehat{\bm{\phi}}_{\bm{x}_0}(t)\in \mathcal{X}$ for $t\in [0,T]$, we have that 
\[\widehat{\bm{\phi}}_{\bm{x}_0}(t)=\bm{\phi}_{\bm{x}_0}(t), \forall t\in [0,T].\]
Also, trajectories of system \textbf{CSPS} evolving in the viable set $\overline{\mathcal{X}}$ can be classified into three  disjoint groups: 
\begin{enumerate}
\item trajectories entering the set $\mathcal{X}_r$ in finite time. It is worth remarking here that these trajectories will not leave the safe set $\mathcal{X}$ before reaching the target set $\mathcal{X}_r$. Due to the fact that \[\widehat{\bm{\phi}}_{\bm{x}_0}(t)=\bm{\phi}_{\bm{x}_0}(t), \forall t\in [0,T]\]
where $\bm{x}_0\in \mathcal{X}$ and $T\in \mathbb{R}_{\geq 0}$ is a time instant such that $\widehat{\bm{\phi}}_{\bm{x}_0}(t)\in \mathcal{X}$ for $t\in [0,T]$, we conclude that the set of initial states deriving these trajectories equals the reach-avoid set $\mathcal{RA}$;
\item trajectories entering the set $\partial \mathcal{X}$ in finite time, but never entering the target set $\mathcal{X}_r$;
\item trajectories staying in the set $\mathcal{X}\setminus \mathcal{X}_r$ for all time.
\end{enumerate}

\subsection{Discount Value Functions}
\label{GBF2}
The discount value function aforementioned is introduced in this subsection.  With a non-negative discount factor $\beta$, the discount value function $V(\bm{x}):\overline{\mathcal{X}}\rightarrow \mathbb{R}$ with a non-negative discount factor $\beta$ is defined in the following form:
\begin{equation}
\label{value2}
V(\bm{x}):=\sup_{t\in \mathbb{R}_{\geq 0}}e^{-\beta t}1_{\mathcal{X}_r}(\widehat{\bm{\phi}}_{\bm{x}}(t)),
\end{equation}
where $1_{\mathcal{X}_r}(\cdot): \mathbb{R}^n \rightarrow \{0,1\}$ is the indicator function of the target set $\mathcal{X}_r$.
Obviously, $V(\bm{x})$ is bounded over the set $\overline{\mathcal{X}}$. Moreover, if $\beta=0$, 
\begin{equation}
\label{VV1111}
V(\bm{x})=\begin{cases}
    1, & \text{if~} \bm{x} \in \mathcal{RA},\\
    0, & \text{otherwise}.
  \end{cases}
\end{equation}
If $\beta>0$, 
\begin{equation}
\label{value111}
V(\bm{x})=\begin{cases}
0, &\text{if~}  \bm{x}\in \overline{\mathcal{X}}\setminus \mathcal{RA},\\
e^{-\beta \tau_{\bm{x}}}, &\text{if~} \bm{x}\in \mathcal{RA},
\end{cases}
\end{equation}
where \[\tau_{\bm{x}}=\inf\{t\in \mathbb{R}_{\geq 0}\mid \widehat{\bm{\phi}}_{\bm{x}}(t) \in \mathcal{X}_r\}\] is the first hitting time of the target set $\mathcal{X}_r$.

From \eqref{VV1111} and \eqref{value111}, we have that following proposition. 
\begin{proposition}
\label{value_reach}
When $\beta=0$, the one level set of $V(\cdot): \overline{\mathcal{X}}\rightarrow \mathbb{R}$ in \eqref{value2} is equal to the reach-avoid set $\mathcal{RA}$, i.e.,\[\{\bm{x}\in \overline{\mathcal{X}} \mid V(\bm{x})=1\}=\mathcal{RA}.\]
When $\beta>0$, the strict zero super level set of $V(\cdot): \overline{\mathcal{X}}\rightarrow \mathbb{R}$ in \eqref{value2} is equal to the reach-avoid set $\mathcal{RA}$, i.e., \[\{\bm{x}\in \overline{\mathcal{X}} \mid V(\bm{x})>0\}=\mathcal{RA}.\]
\end{proposition}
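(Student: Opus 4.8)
\emph{Plan.} The statement is a direct corollary of the two explicit forms of the value function recorded just above it, so my plan is to reduce the proposition to those forms and then concentrate on justifying them. Once \eqref{VV1111} and \eqref{value111} are in hand, reading off the level sets is immediate: for $\beta=0$, \eqref{VV1111} says $V(\bm{x})=1$ precisely when $\bm{x}\in\mathcal{RA}$, so $\{\bm{x}\in\overline{\mathcal{X}}\mid V(\bm{x})=1\}=\mathcal{RA}$; for $\beta>0$, \eqref{value111} gives $V(\bm{x})=e^{-\beta\tau_{\bm{x}}}$ on $\mathcal{RA}$ and $V(\bm{x})=0$ otherwise, and since $e^{-\beta\tau_{\bm{x}}}>0$ for every finite $\tau_{\bm{x}}$, the strict super level set $\{\bm{x}\in\overline{\mathcal{X}}\mid V(\bm{x})>0\}$ coincides with $\mathcal{RA}$. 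Thus essentially all of the content lies in establishing \eqref{VV1111} and \eqref{value111} from the definition \eqref{value2}.

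To establish the forms, I would start from the supremum in \eqref{value2} and use the classification of \textbf{CSPS}-trajectories from Subsection \ref{ISS}. The key structural fact I would invoke is that a switched trajectory $\widehat{\bm{\phi}}_{\bm{x}}(\cdot)$ enters $\mathcal{X}_r$ in finite time if and only if $\bm{x}\in\mathcal{RA}$, which is exactly the identification of the first group of trajectories with $\mathcal{RA}$. Consequently, for $\bm{x}\in\overline{\mathcal{X}}\setminus\mathcal{RA}$ the indicator $1_{\mathcal{X}_r}(\widehat{\bm{\phi}}_{\bm{x}}(t))$ vanishes for all $t\geq 0$, forcing $V(\bm{x})=0$ in both the $\beta=0$ and $\beta>0$ cases. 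For $\bm{x}\in\mathcal{RA}$ and $\beta=0$ the supremum of the $\{0,1\}$-valued integrand equals $1$, because the value $1$ is attained at some hitting time, which yields \eqref{VV1111}. Boundedness of $V$, already noted after \eqref{value2}, guarantees the supremum is finite throughout.

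The delicate case, and the step I expect to be the main obstacle, is $\bm{x}\in\mathcal{RA}$ with $\beta>0$, where I must show $V(\bm{x})=e^{-\beta\tau_{\bm{x}}}$. Writing $S=\{t\geq 0\mid\widehat{\bm{\phi}}_{\bm{x}}(t)\in\mathcal{X}_r\}$ for the set of hitting times, the terms with $t\notin S$ contribute zero, so $V(\bm{x})=\sup_{t\in S}e^{-\beta t}$ with $\inf S=\tau_{\bm{x}}$. Because $\mathcal{X}_r=\{\bm{x}\mid g(\bm{x})<0\}$ is open and $\widehat{\bm{\phi}}_{\bm{x}}(\cdot)$ is continuous, $S$ is an open subset of $\mathbb{R}_{\geq 0}$, so the infimum $\tau_{\bm{x}}$ need not belong to $S$: the point $\widehat{\bm{\phi}}_{\bm{x}}(\tau_{\bm{x}})$ may sit on the boundary $\partial\mathcal{X}_r$ rather than inside $\mathcal{X}_r$, and hence one cannot simply substitute $t=\tau_{\bm{x}}$. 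Instead I would exploit the strict monotonicity of $t\mapsto e^{-\beta t}$: every $t\in S$ satisfies $t\geq\tau_{\bm{x}}$, giving $e^{-\beta t}\leq e^{-\beta\tau_{\bm{x}}}$, while any sequence $t_n\in S$ with $t_n\downarrow\tau_{\bm{x}}$ yields $e^{-\beta t_n}\to e^{-\beta\tau_{\bm{x}}}$; therefore $\sup_{t\in S}e^{-\beta t}=e^{-\beta\tau_{\bm{x}}}$ irrespective of attainment, which is \eqref{value111}. This monotonicity-plus-continuity argument, rather than a direct evaluation, is the crux, and it is precisely what accommodates the openness of the target set.
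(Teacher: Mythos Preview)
Your proposal is correct and follows the paper's approach exactly: the paper derives Proposition~\ref{value_reach} directly from the explicit forms \eqref{VV1111} and \eqref{value111}, with no further argument. You go beyond the paper by supplying a careful justification of \eqref{VV1111} and \eqref{value111} themselves (in particular your monotonicity-plus-approximation argument for the $\beta>0$ case, handling the openness of $\mathcal{X}_r$), which the paper simply asserts without proof.
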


In the following we respectively obtain two sets of quantified constraints for the reach-avoid verification based on the discount value function $V(\bm{x})$ in \eqref{value2} with $\beta>0$ and $\beta=0$. Through thorough analysis on these two sets of constraints, we further obtain a set of more expressive constraints for the reach-avoid verification.

\subsection{Exponential Guidance-barrier Functions}
\label{largerzero}
In this subsection we introduce the construction of quantified constraints based on the discount value function $V(\bm{x})$ in \eqref{value2} with $\beta>0$, such that the reach-avoid verification problem is transformed into a problem of determining the existence of an exponential guidance-barrier function.

The set of constraints is derived from a system of equations admitting the value function $V(\bm{x})$ as solutions, which is formulated in Theorem \ref{equa2}.

\begin{theorem}
\label{equa2}
Given system \textbf{CSPS}, if there exists a continuously differential function $v(\bm{x}): \overline{\mathcal{X}}\rightarrow [0,1]$ such that  
\begin{align}
&\bigtriangledown_{\bm{x}}v(\bm{x}) \cdot \bm{f}(\bm{x})=\beta v(\bm{x}), \forall \bm{x}\in \overline{\mathcal{X}\setminus \mathcal{X}_r}, \label{con11}\\
&v(\bm{x})=0, \forall \bm{x}\in \partial \mathcal{X},\label{con31}\\
&v(\bm{x})=1, \forall \bm{x}\in \mathcal{X}_r,\label{con21}
\end{align}
then $v(\bm{x})=V(\bm{x})$ for $\bm{x}\in \overline{\mathcal{X}}$ and thus \[\{\bm{x}\in \overline{\mathcal{X}}\mid v(\bm{x})>0\}=\mathcal{RA},\] where $V(\cdot): \overline{\mathcal{X}}\rightarrow \mathbb{R}$ is the value function with $\beta>0$ in \eqref{value2}. 
\end{theorem}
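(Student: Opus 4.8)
The plan is to show that the three equations force $v$ to coincide with the value function $V$ by tracking how $v$ evolves along trajectories of \textbf{CSPS}. Fix an arbitrary $\bm{x}\in \overline{\mathcal{X}}$ and consider the induced trajectory $\widehat{\bm{\phi}}_{\bm{x}}(\cdot)$. The first and central observation is that, as long as the trajectory remains inside $\mathcal{X}\setminus \mathcal{X}_r$, we have $\widehat{\bm{f}}=\bm{f}$ there, so the scalar function $m(t):=v(\widehat{\bm{\phi}}_{\bm{x}}(t))$ is differentiable with $\dot{m}(t)=\bigtriangledown_{\bm{x}}v \cdot \bm{f}$ evaluated along the trajectory; invoking \eqref{con11} this reduces to the linear ODE $\dot{m}(t)=\beta m(t)$, whose solution is $m(t)=v(\bm{x})e^{\beta t}$. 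Hence, on any time interval over which the trajectory stays in $\mathcal{X}\setminus \mathcal{X}_r$, the value of $v$ grows purely exponentially, and the relation $m(t)=v(\bm{x})e^{\beta t}$ extends to the closure of the interval by continuity of $m$.

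Next I would use the classification of \textbf{CSPS} trajectories into the three disjoint groups recalled in Subsection \ref{ISS} and treat each case in turn. If the trajectory reaches $\mathcal{X}_r$ in finite time, let $\tau_{\bm{x}}$ be its first hitting time; for $t\in[0,\tau_{\bm{x}})$ the trajectory lies in $\mathcal{X}\setminus \mathcal{X}_r$, and at $t=\tau_{\bm{x}}$ it sits on $\partial \mathcal{X}_r$, where continuity of $v$ together with \eqref{con21} yields $v=1$. Evaluating $m(t)=v(\bm{x})e^{\beta t}$ at $t=\tau_{\bm{x}}$ then gives $1=v(\bm{x})e^{\beta \tau_{\bm{x}}}$, i.e. $v(\bm{x})=e^{-\beta \tau_{\bm{x}}}=V(\bm{x})$ by \eqref{value111}. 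If instead the trajectory reaches $\partial \mathcal{X}$ at some finite time $T$ without ever entering $\mathcal{X}_r$, then it stays in $\mathcal{X}\setminus\mathcal{X}_r$ on $[0,T)$, while $v=0$ on $\partial \mathcal{X}$ by \eqref{con31}; passing to the limit in $m(T)=v(\bm{x})e^{\beta T}=0$ forces $v(\bm{x})=0$, which equals $V(\bm{x})$ since $\bm{x}\notin \mathcal{RA}$. Finally, if the trajectory remains in $\mathcal{X}\setminus \mathcal{X}_r$ for all time, then $m(t)=v(\bm{x})e^{\beta t}$ holds for every $t\ge 0$; since $v$ takes values in $[0,1]$ and $\beta>0$, boundedness of $m$ rules out $v(\bm{x})>0$, so $v(\bm{x})=0=V(\bm{x})$.

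Having established $v(\bm{x})=V(\bm{x})$ for all $\bm{x}\in \overline{\mathcal{X}}$, the claimed identity $\{\bm{x}\in \overline{\mathcal{X}}\mid v(\bm{x})>0\}=\mathcal{RA}$ follows immediately from Proposition \ref{value_reach}. The main obstacle I anticipate is the careful treatment of boundary behaviour: one must justify that the exponential relation extends up to the hitting time via continuity of $m(t)=v(\widehat{\bm{\phi}}_{\bm{x}}(t))$, and that reaching the \emph{open} target $\mathcal{X}_r$ forces the trajectory first to touch $\partial \mathcal{X}_r$ (so that the infimum $\tau_{\bm{x}}$ is attained at a point where $g=0$), so that the value $v=1$ there, obtained from \eqref{con21} by continuity of $v$, can legitimately be substituted into the relation. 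The remaining steps are routine integration of the scalar ODE and case bookkeeping.
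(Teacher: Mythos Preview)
Your proposal is correct and follows essentially the same approach as the paper: integrate \eqref{con11} along the trajectory to obtain the relation $v(\bm{x})=e^{-\beta t}\,v(\widehat{\bm{\phi}}_{\bm{x}}(t))$ (equivalently your $m(t)=v(\bm{x})e^{\beta t}$), then split into the same three trajectory classes and read off the value from the boundary conditions \eqref{con21}, \eqref{con31}, or from boundedness of $v$. Your treatment is in fact slightly more explicit than the paper's about the continuity arguments needed at the hitting times; the only cosmetic omission is the trivial sub-case $\bm{x}\in\mathcal{X}_r$ (where $\tau_{\bm{x}}=0$ and \eqref{con21} applies directly rather than via $\partial\mathcal{X}_r$), which the paper singles out but which falls under your Case~1 anyway.
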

\begin{proof}
We first consider that $\bm{x}\in \mathcal{RA}$. If $\bm{x}\in \mathcal{X}_r$, $v(\bm{x})=1=e^{0}$ according to constraint \eqref{con21}. Therefore, we just need to consider $\bm{x}\in \mathcal{RA}\setminus \mathcal{X}_r$.

From \eqref{con11}, we have that \[v(\bm{x})=e^{-\beta \tau} v(\widehat{\bm{\phi}}_{\bm{x}}(\tau)), \forall \tau \in [0,\tau_{\bm{x}}],\] where $\tau_{\bm{x}}$ is the first hitting time of the target set $\mathcal{X}_r$. Due to constraint \eqref{con21}, we further have that \[v(\bm{x})=e^{-\beta \tau_{\bm{x}}}.\]

Next, we consider that $\bm{x}\in \overline{\mathcal{X}}\setminus \mathcal{RA}$, but its resulting trajectory $\widehat{\bm{\phi}}_{\bm{x}}(\tau)$ will stay within the set $\mathcal{X}\setminus \mathcal{X}_r$ for all time. Due to constraint \eqref{con11}, we have that \[v(\bm{x})=e^{-\beta \tau} v(\widehat{\bm{\phi}}_{\bm{x}}(\tau))\] for $\tau \in \mathbb{R}_{\geq 0}.$ Since $v(\cdot): \overline{\mathcal{X}}\rightarrow \mathbb{R}$ is bounded, we have that \[v(\bm{x})=0.\] 

Finally, we consider that $\bm{x}\in \overline{\mathcal{X}}\setminus \mathcal{RA}$, but its resulting trajectory $\widehat{\bm{\phi}}_{\bm{x}}(\tau)$ will touch the set $\partial \mathcal{X}$ in finite time and never enter the target set $\mathcal{X}_r$. Let \[\tau'_{\bm{x}}=\inf\{t\in \mathbb{R}_{\geq 0}\mid \widehat{\bm{\phi}}_{\bm{x}}(t) \in \partial \mathcal{X}\}\] be the first hitting time of the set $\partial \mathcal{X}$. 

If $\bm{x}\in \partial \mathcal{X}$, $v(\bm{x})=0$ holds from constraint \eqref{con31}. Otherwise, $\tau'_{\bm{x}}>0$. Further, from \eqref{con11} we have 
\[v(\bm{x})=e^{-\beta \tau} v(\widehat{\bm{\phi}}_{\bm{x}}(\tau)), \forall \tau \in [0,\tau'_{\bm{x}}).\]
Regarding constraint \eqref{con31}, which implies that $v(\widehat{\bm{\phi}}_{\bm{x}}(\tau'_{\bm{x}}))=0$, we have $v(\bm{x})=0$.

Thus, \[v(\bm{x})=V(\bm{x})\] for $\bm{x}\in \overline{\mathcal{X}}$ according to \eqref{value111} and $\mathcal{RA}=\{\bm{x}\in \overline{\mathcal{X}}\mid v(\bm{x})>0\}$. 
\end{proof}

Via relaxing the set of equations \eqref{con11}-\eqref{con21}, we can obtain a set of inequalities for computing what we call exponential guidance-barrier function, whose existence ensures the satisfaction of the reach-avoid property in Definition \ref{RNS}. This set of inequalities is formulated in Proposition \ref{inequality1}, in which inequalities \eqref{con311} and  \eqref{con511} are obtained directly by relaxing equations \eqref{con11} and \eqref{con31}, respectively. 

\begin{proposition}
\label{inequality1}
 If there exists a continuously differentiable function $v(\bm{x}):\overline{\mathcal{X}}\rightarrow \mathbb{R}$ such that 
\begin{align}
&v(\bm{x})>0, \forall \bm{x}\in \mathcal{X}_0, \label{con3110}\\
&\bigtriangledown_{\bm{x}}v(\bm{x}) \cdot \bm{f}(\bm{x})\geq \beta v (\bm{x}), \forall \bm{x}\in \overline{\mathcal{X}\setminus \mathcal{X}_r},\label{con311} \\
&v(\bm{x})\leq 0, \forall \bm{x}\in \partial \mathcal{X},\label{con511}
\end{align}
where $\beta>0$ is a user-defined value, then the reach-avoid property in the sense of Definition \ref{RNS} holds.
\end{proposition}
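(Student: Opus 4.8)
The plan is to argue directly from the relaxed inequalities, showing that $\mathcal{X}_0\subseteq\mathcal{RA}$; once this inclusion holds, the classification of \textbf{CSPS}-trajectories in Subsection~\ref{ISS} guarantees that every trajectory initialised in $\mathcal{X}_0$ reaches $\mathcal{X}_r$ in finite time without having left $\mathcal{X}$ beforehand, which is exactly the reach-avoid property of Definition~\ref{RNS}. The engine of the proof is the auxiliary scalar function
\[
u(t) := e^{-\beta t}\, v\bigl(\widehat{\bm{\phi}}_{\bm{x}_0}(t)\bigr)
\]
evaluated along the switched trajectory. Wherever the trajectory lies in $\mathcal{X}$ the switched dynamics coincide with $\bm{f}$, so the chain rule gives $\dot{u}(t)=e^{-\beta t}\bigl(\bigtriangledown_{\bm{x}}v\cdot\bm{f}-\beta v\bigr)\bigl(\widehat{\bm{\phi}}_{\bm{x}_0}(t)\bigr)$, and constraint~\eqref{con311} forces $\dot{u}(t)\geq 0$ as long as $\widehat{\bm{\phi}}_{\bm{x}_0}(t)\in\overline{\mathcal{X}\setminus\mathcal{X}_r}$. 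In other words, $u$ is non-decreasing along any trajectory segment that has not yet entered $\mathcal{X}_r$ or touched $\partial\mathcal{X}$; this monotonicity replaces the exact identity used in the proof of Theorem~\ref{equa2}.

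Next I fix $\bm{x}_0\in\mathcal{X}_0$ and note $u(0)=v(\bm{x}_0)>0$ by~\eqref{con3110}. Arguing by contradiction, suppose $\bm{x}_0\notin\mathcal{RA}$; then by the three-way classification its \textbf{CSPS}-trajectory falls into one of the two non-reaching groups. In the first case the trajectory meets $\partial\mathcal{X}$ at a first time $\tau'>0$ while staying in $\overline{\mathcal{X}\setminus\mathcal{X}_r}$ on $[0,\tau']$; monotonicity of $u$ then yields $e^{-\beta\tau'}v(\widehat{\bm{\phi}}_{\bm{x}_0}(\tau'))=u(\tau')\geq u(0)>0$, hence $v(\widehat{\bm{\phi}}_{\bm{x}_0}(\tau'))>0$, which contradicts~\eqref{con511} since $\widehat{\bm{\phi}}_{\bm{x}_0}(\tau')\in\partial\mathcal{X}$. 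In the second case the trajectory remains in $\mathcal{X}\setminus\mathcal{X}_r$ for all $t\geq 0$, so $u$ is non-decreasing on $[0,\infty)$, giving $v(\widehat{\bm{\phi}}_{\bm{x}_0}(t))\geq e^{\beta t}v(\bm{x}_0)$; since $\beta>0$ and $v(\bm{x}_0)>0$ the right-hand side diverges as $t\to\infty$, contradicting the boundedness of the continuous function $v$ on the compact set $\overline{\mathcal{X}}$ (recall $\mathcal{X}$ is bounded). Both cases being impossible, $\bm{x}_0\in\mathcal{RA}$, and since $\bm{x}_0$ was arbitrary, $\mathcal{X}_0\subseteq\mathcal{RA}$.

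The main obstacle to make rigorous is the monotonicity step, because the switched field $\widehat{\bm{f}}$ is discontinuous across $\partial\mathcal{X}$ and $\widehat{\bm{\phi}}_{\bm{x}_0}$ is therefore only piecewise smooth. This is handled by observing that on the open safe set $\mathcal{X}$ one has $\widehat{\bm{f}}=\bm{f}$ and the trajectory is as smooth as $\bm{f}$, so the differentiation of $u$ is legitimate on the relevant segment; I only ever integrate $\dot u\geq 0$ up to the first hit of $\partial\mathcal{X}$ or $\mathcal{X}_r$ and pass to the endpoint $\tau'$ by continuity of $u$, exploiting that~\eqref{con311} is stated on the closure $\overline{\mathcal{X}\setminus\mathcal{X}_r}$. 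The positivity of the discount factor is essential only in the second case: with $\beta=0$ the bound $v(\widehat{\bm{\phi}}_{\bm{x}_0}(t))\geq v(\bm{x}_0)$ no longer forces a blow-up, which is precisely why a separate $\beta=0$ development is pursued later in the paper.
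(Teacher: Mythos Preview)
Your argument is correct. The paper itself does not prove Proposition~\ref{inequality1}: immediately after stating it, the authors observe that \eqref{con3110}--\eqref{con511} is a sign-reversed simplification of the constraints \eqref{0411}--\eqref{0451} from \cite{korda2014controller} (their Proposition~\ref{korda2014}) and explicitly write ``we did not give a proof of Proposition~\ref{inequality1} here.'' So there is no paper proof to compare against in the strict sense.

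That said, your direct argument via $u(t)=e^{-\beta t}v(\widehat{\bm{\phi}}_{\bm{x}_0}(t))$ is exactly the inequality analogue of the identity $v(\bm{x})=e^{-\beta\tau}v(\widehat{\bm{\phi}}_{\bm{x}}(\tau))$ that the paper uses in the proof of Theorem~\ref{equa2}, and the two-case contradiction (boundary hit versus indefinite sojourn in $\mathcal{X}\setminus\mathcal{X}_r$) mirrors the structure of the paper's proof of Proposition~\ref{inequality}. In that sense your route is the one the authors implicitly have in mind; the only difference is that they outsource the result to \cite{korda2014controller} while you supply the self-contained computation. Your remark about handling the discontinuity of $\widehat{\bm{f}}$ by differentiating only on $[0,\tau')$ and passing to $\tau'$ by continuity of $u$ is the right way to make the monotonicity step rigorous, and your observation that $\beta>0$ is used only in the indefinite-sojourn case matches the paper's own commentary following Example~\ref{illu1}.
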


Comparing constraints \eqref{con3110}-\eqref{con511} and  \eqref{0411}-\eqref{0451}, we find that the former is just a simplified version of the latter, i.e., if a function $v(\bm{x})$ satisfies constraints \eqref{0411}-\eqref{0451}, $-v(\bm{x})$ satisfies \eqref{con3110}-\eqref{con511}. The former can be obtained by removing constraints \eqref{0431} and \eqref{0441} and reversing the inequality sign in the rest of constraints in the latter. Therefore, we did not give a proof of Proposition \ref{inequality1} here. Also, due to its more concise form, constraint \eqref{con3110}-\eqref{con511} will be used for discussions and comparisons instead of  constraint \eqref{0411}-\eqref{0451} in the sequel.

If an exponential guidance-barrier function $v(\bm{x})$ satisfying constraint \eqref{con3110}-\eqref{con511} is found, the reach-avoid property in Definition \ref{RNS} holds, which is justified via Proposition \ref{inequality1}. Moreover, it is observed that the set \[\mathcal{R}=\{\bm{x}\in \overline{\mathcal{X}}\mid v(\bm{x})>0\}\] is an under-approximation of the reach-avoid set, i.e., $\mathcal{R}\subseteq \mathcal{RA}$. Also, due to constraint \eqref{con311}, we conclude that the set $\mathcal{R}$ is an invariant for system \textbf{CTolS} until it enters the target set $\mathcal{X}_r$, i.e., the boundary $\partial \mathcal{R}=\{\bm{x}\in \overline{\mathcal{X}}\mid v(\bm{x})=0\}$ is a barrier, preventing system \textbf{CTolS} from leaving the set $\mathcal{R}$ and escorting system \textbf{CTolS} to the target set $\mathcal{X}_r$ safely; furthermore, we observe that if $v(\bm{x})$ satisfies constraint \eqref{con3110}-\eqref{con511}, it must satisfy 
\begin{equation}
\label{exponential0}
\begin{split}
& \bigtriangledown_{\bm{x}}v(\bm{x})\cdot \bm{f}(\bm{x})\geq \beta v(\bm{x}), \forall \bm{x}\in \overline{\mathcal{R}\setminus \mathcal{X}_r}.
\end{split}
\end{equation}
This constraint indicates that trajectories starting from $\mathcal{R}$ will approach the target set $\mathcal{X}_r$ at an exponential rate of $\beta$. This is why we term a solution to the set of constraints \eqref{con3110}-\eqref{con511} exponential guidance-barrier function. The above analysis also uncovers a necessary condition such that $v(\bm{x})$ is a solution to constraint \eqref{con3110}-\eqref{con511}, which is \[\mathcal{R}\cap \mathcal{X}_r\neq \emptyset.\] 

The condition of entering the target set at an exponential rate is strict for many cases, limiting the application of constraint \eqref{con3110}-\eqref{con511} to the reach-avoid verification in practice. On the other hand, since the initial set $\mathcal{X}_0$ should be a subset of the set $\mathcal{R}$, the less conservative the set $\mathcal{R}$ is, the more likely the reach-avoid property is able to be verified. It is concluded from constraint \eqref{exponential0} that the smaller $\beta$ is, the less conservative the set $\mathcal{R}$ is inclined to be. This is illustrated in the following example. 
\begin{example}
\label{illu1}
Consider an academic example,
\begin{equation}
    \begin{cases}
           &\dot{x}=-0.5x-0.5y+0.5xy\\
           &\dot{y}=-0.5y+0.5
    \end{cases}
\end{equation}
with $\mathcal{X}=\{(x,y)^{\top}\mid x^2 + y^2 - 1 <0\}$, $\mathcal{X}_r=\{(x,y)^{\top}\mid (x+0.2)^2 + (y -0.7)^2 - 0.02<0\}$ and $\mathcal{X}_0=\{(x,y)^{\top}\mid 0.1-x<0,x-0.5<0,-0.8-y<0,y+0.5<0\}$.

In this example we use $\beta=0.1$ and $\beta=1$ to illustrate the effect of $\beta$ on the reach-avoid verification. Constraint \eqref{con3110}-\eqref{con511} is solved by relaxing it into a semi-definite program, which is presented in Appendix, via sum-of-squares decomposition techniques. The degree of all polynomials in the resulting semi-definite program is taken the same and is taken in order of $\{2,4,6,8,10,\ldots,20\}$. When the reach-avoid property is verified successfully, the computations will terminate. The degree is respectively $14$ for $\beta=0.1$ and $20$ for $\beta=1$ when termination. Both of the computed sets $\mathcal{R}$ are showcased in Fig. \ref{illu_eps}, which almost collide with each other. 
 
\begin{figure}[htb!]
\center
\includegraphics[width=2.5in,height=2.0in]{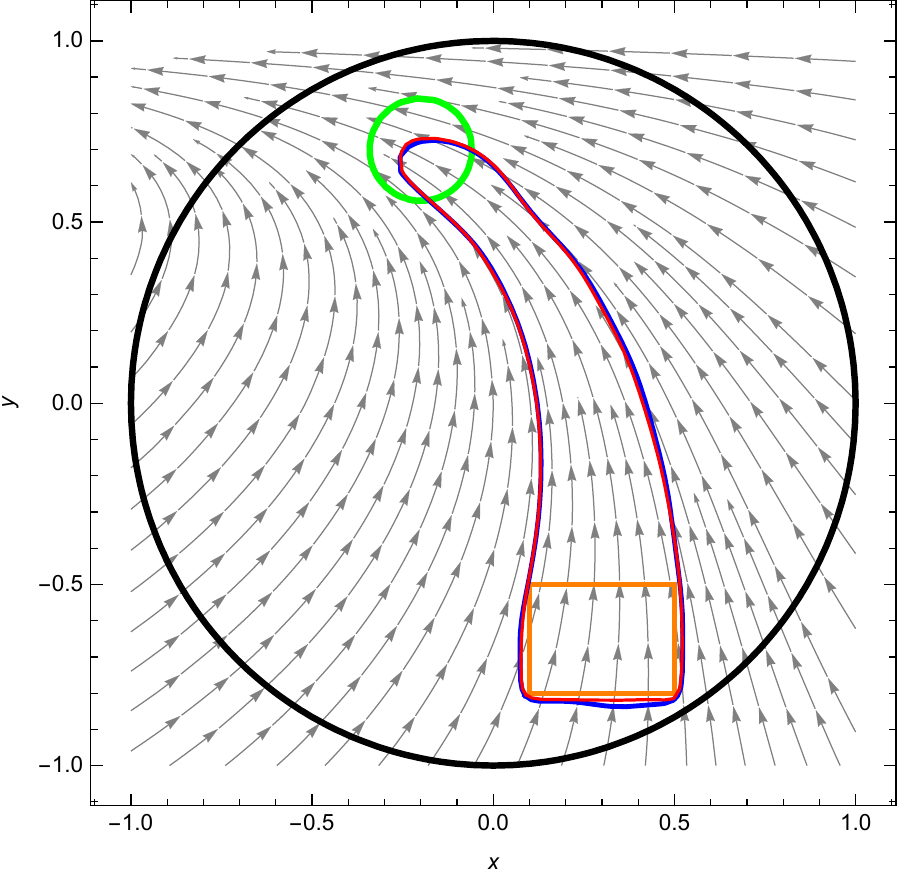} 
\caption{Green, orange and black curve- $\partial \mathcal{X}_r$, $\partial \mathcal{X}_0$ and $\partial \mathcal{X}$; blue and red curve - $\partial \mathcal{R}$, which are computed via respectively solving constraints \eqref{con3110}-\eqref{con511} with $\beta=0.1$, and \eqref{con3110}-\eqref{con511} with  $\beta=1$.}
\label{illu_eps}
\end{figure}
\end{example}

It is worth emphasizing here that although the discount factor $\beta$ can arbitrarily approach zero from above,
it cannot be zero in constraint \eqref{con3110}-\eqref{con511}, since a function $v(\bm{x})$ satisfying this constraint with $\beta=0$ cannot rule out the existence of trajectories, which start from $\mathcal{X}_0$ and stay inside $\mathcal{X}\setminus \mathcal{X}_r$ for ever. Consequently, we do not recommend the use of too small $\beta$ in practical numerical computations in order to avoid numerical issue (i.e., preventing the term $\beta V(\bm{x})$ in the right hand of constraint \eqref{con311} from becoming zero numerically due to floating point errors).

Although a set of constraints for the reach-avoid verification when $\beta=0$ cannot be obtained directly from \eqref{con3110}-\eqref{con511}, we will obtain one from the discount function \eqref{value2} with $\beta=0$ in the sequel, expecting to remedy the shortcoming of the strict requirement of exponentially entering the set $\mathcal{X}_r$ when $\beta>0$.

\subsection{Asymptotic Guidance-barrier Functions}
In this subsection we elucidate the construction of constraints for the reach-avoid verification based on the discount value function $V(\bm{x})$ in \eqref{value2} with $\beta=0$. In this case, the reach-avoid verification problem
is transformed into a problem of determining the existence of
an asymptotic guidance-barrier function. 

The set of constraints is constructed via relaxing a system of equations admitting the value function $V(\bm{x})$ in \eqref{value2} with $\beta=0$ as solutions. These equations are presented in Theorem \ref{equa1}.
\begin{theorem}
\label{equa1}
Given system \textbf{CSPS}, if there exist continuously differential functions $v(\bm{x}): \overline{\mathcal{X}}\rightarrow \mathbb{R}$ and $w(\bm{x}):\overline{\mathcal{X}}\rightarrow \mathbb{R}$ satisfying 
\begin{align}
&\bigtriangledown_{\bm{x}}v(\bm{x}) \cdot \bm{f}(\bm{x})=0, \forall \bm{x}\in \overline{\mathcal{X}\setminus \mathcal{X}_r},\label{con1}\\
&v(\bm{x})=\bigtriangledown_{\bm{x}}w(\bm{x}) \cdot \bm{f}(\bm{x}), \forall \bm{x} \in \overline{\mathcal{X}\setminus \mathcal{X}_r}, \label{con2}\\
&v(\bm{x})=0, \forall \bm{x} \in \partial \mathcal{X}, \label{con4}\\
&v(\bm{x})=1, \forall \bm{x} \in \mathcal{X}_r, \label{con3}
\end{align}
then $v(\bm{x})=V(\bm{x})$ for $\bm{x}\in \overline{\mathcal{X}}$ and thus \[\{\bm{x}\in \overline{\mathcal{X}}\mid v(\bm{x})=1\}=\mathcal{RA},\] where $V(\cdot): \overline{\mathcal{X}}\rightarrow \mathbb{R}$ is the value function with $\beta=0$ in \eqref{value2}. 
\end{theorem}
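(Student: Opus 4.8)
The plan is to mirror the case analysis used in the proof of Theorem \ref{equa2}, but with the exponential decay replaced by the conservation law that \eqref{con1} imposes, and to isolate the genuinely new difficulty in the third class of trajectories. The starting observation is that \eqref{con1} says precisely that $v$ is a first integral of \textbf{CSPS} on $\overline{\mathcal{X}\setminus\mathcal{X}_r}$: along any trajectory $\widehat{\bm{\phi}}_{\bm{x}}(\cdot)$ that remains in $\mathcal{X}\setminus\mathcal{X}_r$, one has $\frac{d}{dt}v(\widehat{\bm{\phi}}_{\bm{x}}(t))=\bigtriangledown_{\bm{x}}v\cdot\bm{f}=0$, so $v$ is constant along that trajectory up to the first time it hits $\partial\mathcal{X}$, reaches $\overline{\mathcal{X}_r}$, or forever. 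I will then run the three-way classification of trajectories recalled in Subsection \ref{ISS} and show $v=V$ in each class, using that for $\beta=0$ the value function is the $\{0,1\}$-valued function in \eqref{VV1111}.

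For a point $\bm{x}\in\mathcal{RA}$, if $\bm{x}\in\mathcal{X}_r$ then $v(\bm{x})=1$ by \eqref{con3}; otherwise the trajectory reaches $\mathcal{X}_r$ at the first hitting time $\tau_{\bm{x}}$, so $\widehat{\bm{\phi}}_{\bm{x}}(\tau_{\bm{x}})\in\partial\mathcal{X}_r\subseteq\overline{\mathcal{X}_r}$, where continuity of $v$ together with \eqref{con3} gives $v=1$, and constancy of $v$ on $[0,\tau_{\bm{x}}]$ then yields $v(\bm{x})=1=V(\bm{x})$. For a point outside $\mathcal{RA}$ whose trajectory reaches $\partial\mathcal{X}$ in finite time without ever entering $\mathcal{X}_r$, constancy of $v$ on $[0,\tau'_{\bm{x}}]$ together with the boundary condition \eqref{con4} gives $v(\bm{x})=v(\widehat{\bm{\phi}}_{\bm{x}}(\tau'_{\bm{x}}))=0=V(\bm{x})$, where $\tau'_{\bm{x}}$ is the first hitting time of $\partial\mathcal{X}$.

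The crux is the remaining class: points $\bm{x}\in\overline{\mathcal{X}}\setminus\mathcal{RA}$ whose trajectory stays in $\mathcal{X}\setminus\mathcal{X}_r$ for all time. Here \eqref{con1} alone cannot pin down $v$, and this is exactly where the auxiliary function $w$ and constraint \eqref{con2} enter. Along such a trajectory $\widehat{\bm{f}}=\bm{f}$, so \eqref{con2} reads $v(\widehat{\bm{\phi}}_{\bm{x}}(t))=\frac{d}{dt}w(\widehat{\bm{\phi}}_{\bm{x}}(t))$ for all $t\geq 0$. Integrating over $[0,T]$ and using that $v$ is constant $=v(\bm{x})$ along the trajectory gives $v(\bm{x})\,T=w(\widehat{\bm{\phi}}_{\bm{x}}(T))-w(\bm{x})$. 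Since $\mathcal{X}$ is bounded, $\overline{\mathcal{X}}$ is compact and the continuous function $w$ is bounded on it, so the right-hand side stays bounded as $T\to\infty$ while the left-hand side grows linearly unless $v(\bm{x})=0$; hence $v(\bm{x})=0=V(\bm{x})$.

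Collecting the three cases gives $v\equiv V$ on $\overline{\mathcal{X}}$, and Proposition \ref{value_reach} then identifies $\{\bm{x}\in\overline{\mathcal{X}}\mid v(\bm{x})=1\}$ with $\mathcal{RA}$. I expect this last case---turning ``no net growth of the bounded potential $w$'' into ``$v=0$''---to be the main obstacle, and it is the only place where \eqref{con2} is genuinely used; the two finite-time cases are routine propagation arguments identical in spirit to those in Theorem \ref{equa2}. The one technical point to handle carefully is that the values at the hitting instants lie on $\partial\mathcal{X}_r$ and $\partial\mathcal{X}$ rather than in the open sets $\mathcal{X}_r$ and $\mathcal{X}$, so I would invoke continuity of $v$ to transfer the boundary/target values from the open sets to their closures.
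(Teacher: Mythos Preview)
Your proposal is correct and follows essentially the same route as the paper: the same three-way case split on trajectories, constancy of $v$ from \eqref{con1} to propagate boundary/target values in the two finite-hitting cases, and integration of \eqref{con2} together with boundedness of $w$ on the compact set $\overline{\mathcal{X}}$ to force $v(\bm{x})=0$ in the ``stay forever'' case. Your remark about needing continuity of $v$ to transfer the value $1$ from the open set $\mathcal{X}_r$ to the first-hit point on $\partial\mathcal{X}_r$ is in fact a small technical refinement over the paper, which simply invokes \eqref{con3} at that step.
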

\begin{proof}
From \eqref{con1}, we have that 
\begin{equation}
\label{con1_equal}
v(\bm{x})=v(\widehat{\bm{\phi}}_{\bm{x}}(\tau)), \forall  \tau \in [0,\tau_{\bm{x}}],
\end{equation}
where $\tau_{\bm{x}}\in \mathbb{R}_{\geq 0}$ is the time instant such that \[\widehat{\bm{\phi}}_{\bm{x}}(\tau) \in \overline{\mathcal{X}\setminus \mathcal{X}_r}, \forall \tau \in [0,\tau_{\bm{x}}].\] 

For $\bm{x}\in \mathcal{RA}$, we can obtain that $v(\bm{x})=1$ due to constraints \eqref{con3} and \eqref{con1_equal}.

In the following we consider $\bm{x}\in \overline{\mathcal{X}}\setminus \mathcal{RA}$.

We first consider $\bm{x} \in \overline{\mathcal{X}}\setminus \mathcal{RA}$, but its trajectory $\widehat{\bm{\phi}}_{\bm{x}}(\cdot): \mathbb{R}_{\geq 0} \rightarrow \mathbb{R}^n$ stays within the set $\mathcal{X}\setminus \mathcal{X}_r$. From \eqref{con2}, we have that \[v(\widehat{\bm{\phi}}_{\bm{x}}(\tau) )=\bigtriangledown_{\bm{y}}w(\bm{y}) \cdot \bm{f}(\bm{y})\mid_{\bm{y}=\widehat{\bm{\phi}}_{\bm{x}}(\tau)}\] for $\tau\in \mathbb{R}_{\geq 0}$.
Thus, we have that \[\int_{0}^t v(\widehat{\bm{\phi}}_{\bm{x}}(\tau)) d\tau=\int_0^t \bigtriangledown_{\bm{y}}w(\bm{y}) \cdot \bm{f}(\bm{y})\mid_{\bm{y}=\widehat{\bm{\phi}}_{\bm{x}}(\tau)} d\tau\] for $t\in \mathbb{R}_{\geq 0}$ and further \[v(\bm{x})=\frac{w(\widehat{\bm{\phi}}_{\bm{x}}(t) )-w(\bm{x})}{t}\] for $t\in \mathbb{R}_{\geq 0}$.
Since $w(\bm{x})$ is continuously differentiable function over $\overline{\mathcal{X}}$, it is bounded over $\bm{x}\in \overline{\mathcal{X}}$. Consequently, $v(\bm{x})=0$.

Next, we consider $\bm{x} \in \overline{\mathcal{X}}\setminus \mathcal{RA}$, but its trajectory $\widehat{\bm{\phi}}_{\bm{x}}(\tau)$ will touch  $\partial \mathcal{X}$ in finite time and never enters the target set $\mathcal{X}_r$. For such $\bm{x}$, we can obtain that $v(\bm{x})=0$ due to constraints \eqref{con1_equal} and \eqref{con4}.

Thus, according to \eqref{VV1111}, \[v(\bm{x})=V(\bm{x})\] for $\bm{x}\in \overline{\mathcal{X}}$. Further, from Lemma \ref{value_reach}, \[\{\bm{x}\in \overline{\mathcal{X}} \mid v(\bm{x})=1\}=\mathcal{RA}\] holds. 
\end{proof}

Based on the system of equations \eqref{con1}-\eqref{con3}, we have a set of inequalities as shown in Proposition \ref{inequality} for computing an asymptotic guidance-barrier function $v(\bm{x})$ to ensure the satisfaction of reach-avoid properties in the sense of Definition \ref{RNS}. In Proposition \ref{inequality}, inequalities \eqref{con3_0}, \eqref{con4_0} and \eqref{con5_0} are obtained directly by relaxing equations \eqref{con1}, \eqref{con2} and \eqref{con4}, respectively. 
\begin{proposition}
\label{inequality}
 If there exist a  continuously differentiable function $v(\bm{x}):\overline{\mathcal{X}}\rightarrow \mathbb{R}$ and a continuously differentiable function $w(\bm{x}):\overline{\mathcal{X}}\rightarrow \mathbb{R}$ satisfying 
\begin{align}
& v(\bm{x})>0, \forall \bm{x}\in \mathcal{X}_0, \label{con4_00}\\
&\bigtriangledown_{\bm{x}}v(\bm{x}) \cdot \bm{f}(\bm{x})\geq 0, \forall \bm{x}\in \overline{\mathcal{X}\setminus \mathcal{X}_r}, \label{con3_0} \\
&v(\bm{x})-\bigtriangledown_{\bm{x}}w(\bm{x}) \cdot \bm{f}(\bm{x})\leq 0, \forall \bm{x}\in \overline{\mathcal{X}\setminus \mathcal{X}_r},\label{con4_0} \\
&v(\bm{x})\leq 0, \forall \bm{x}\in \partial \mathcal{X},\label{con5_0}
\end{align}
then the reach-avoid property in the sense of Definition \ref{RNS} holds.
\end{proposition}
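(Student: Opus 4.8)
The plan is to show that any initial state $\bm{x}_0 \in \mathcal{X}_0$ gives rise to a trajectory of \textbf{CTolS} that reaches $\mathcal{X}_r$ in finite time while staying in $\mathcal{X}$, by arguing contrapositively against the two non-reaching trajectory classes identified in Subsection \ref{ISS}. First I would fix $\bm{x}_0 \in \mathcal{X}_0$ and note that $v(\bm{x}_0) > 0$ by \eqref{con4_00}. The key idea is that constraint \eqref{con3_0} forces $v$ to be non-decreasing along the flow $\widehat{\bm{\phi}}_{\bm{x}_0}$ as long as the trajectory remains in $\overline{\mathcal{X}\setminus\mathcal{X}_r}$; concretely, $\frac{d}{dt} v(\widehat{\bm{\phi}}_{\bm{x}_0}(t)) = \bigtriangledown_{\bm{x}}v \cdot \bm{f} \ge 0$ there, so $v(\widehat{\bm{\phi}}_{\bm{x}_0}(t)) \ge v(\bm{x}_0) > 0$ for every $t$ before the target (or boundary) is hit.

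Next I would rule out the trajectory class that touches $\partial\mathcal{X}$ before reaching $\mathcal{X}_r$. Suppose such a trajectory exists and let $\tau'_{\bm{x}_0}$ be its first hitting time of $\partial\mathcal{X}$, so $\widehat{\bm{\phi}}_{\bm{x}_0}(\tau'_{\bm{x}_0}) \in \partial\mathcal{X}$. By continuity of $v$ and the monotonicity just established, $v(\widehat{\bm{\phi}}_{\bm{x}_0}(\tau'_{\bm{x}_0})) \ge v(\bm{x}_0) > 0$, which directly contradicts \eqref{con5_0}, namely $v \le 0$ on $\partial\mathcal{X}$. Hence no trajectory from $\mathcal{X}_0$ can escape through $\partial\mathcal{X}$ before hitting the target, establishing the safety (avoidance) half of the reach-avoid property.

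The main obstacle is excluding the third class: trajectories that remain in $\mathcal{X}\setminus\mathcal{X}_r$ for all time. This is exactly where the auxiliary function $w$ and constraint \eqref{con4_0} do the work. On such a trajectory, \eqref{con4_0} gives $v(\widehat{\bm{\phi}}_{\bm{x}_0}(t)) \le \bigtriangledown_{\bm{x}}w \cdot \bm{f}\mid_{\widehat{\bm{\phi}}_{\bm{x}_0}(t)} = \frac{d}{dt} w(\widehat{\bm{\phi}}_{\bm{x}_0}(t))$. Integrating over $[0,t]$ yields $\int_0^t v(\widehat{\bm{\phi}}_{\bm{x}_0}(\tau))\,d\tau \le w(\widehat{\bm{\phi}}_{\bm{x}_0}(t)) - w(\bm{x}_0)$. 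Since $v \ge v(\bm{x}_0) > 0$ along this trajectory, the left side grows at least linearly, $\int_0^t v\,d\tau \ge v(\bm{x}_0)\, t \to \infty$, whereas the right side is bounded because $w$ is continuous on the compact set $\overline{\mathcal{X}}$ (here I would invoke boundedness of $\mathcal{X}$). This contradiction rules out the eternally-trapped class.

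Combining the three cases, the only surviving possibility is that $\widehat{\bm{\phi}}_{\bm{x}_0}$ enters $\mathcal{X}_r$ in finite time; and since $v > 0$ throughout, by the remark in Subsection \ref{ISS} the switched-system trajectory coincides with the genuine \textbf{CTolS} trajectory $\bm{\phi}_{\bm{x}_0}$ up to that hitting time, so the reach-avoid property of Definition \ref{RNS} holds. The subtle points to handle carefully are the measurability/continuity needed to differentiate $v\circ\widehat{\bm{\phi}}_{\bm{x}_0}$ along the switched flow, and verifying that the integral estimate uses only the lower bound $v(\bm{x}_0)$ rather than requiring $v$ itself to stay uniformly away from zero for the full argument.
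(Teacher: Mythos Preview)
Your proposal is correct and follows essentially the same argument as the paper's proof: monotonicity of $v$ along the flow via \eqref{con3_0}, a contradiction with \eqref{con5_0} to exclude boundary escape, and the integration argument with $w$ via \eqref{con4_0} to exclude trajectories trapped forever in $\mathcal{X}\setminus\mathcal{X}_r$. The only cosmetic difference is that the paper frames the argument through the set $\mathcal{R}=\{\bm{x}\in\overline{\mathcal{X}}\mid v(\bm{x})>0\}$ and shows the trajectory cannot leave $\mathcal{R}$ through $\partial\mathcal{R}$ (working directly with $\bm{\phi}_{\bm{x}_0}$), whereas you organize it around the three trajectory classes of the switched system $\widehat{\bm{\phi}}_{\bm{x}_0}$ from Subsection~\ref{ISS}; both routes arrive at the identical integral estimate $w(\bm{\phi}_{\bm{x}_0}(t))-w(\bm{x}_0)\ge t\,v(\bm{x}_0)$ and the same boundedness contradiction.
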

\begin{proof}
Let $\mathcal{R}=\{\bm{x}\in \overline{\mathcal{X}}\mid v(\bm{x})>0\}$. We will show that 
\[\mathcal{R}\subseteq \mathcal{RA}.\] If it holds, we can obtain the conclusion since \[\mathcal{X}_0\subseteq \mathcal{RA},\] which is obtained from constraint \eqref{con4_00}.

Let $\bm{x}_0\in \mathcal{R}$. Obviously, $\bm{x}_0\in \mathcal{X}$ due to constraint \eqref{con5_0}. If $\bm{x}_0\in \mathcal{X}_{r}$, $\bm{x}_0\in \mathcal{RA}$ holds obviously. Therefore, in the following we assume $\bm{x}_0\in \mathcal{R} \setminus  \mathcal{X}_{r}$. We will prove that there exists $t\in \mathbb{R}_{\geq 0}$ satisfying \[\bm{\phi}_{\bm{x}_0}(t) \in \mathcal{X}_{r}\bigwedge \forall \tau \in [0,t]. \bm{\phi}_{\bm{x}_0}(\tau) \in \mathcal{R}.\]

Assume that there exists $t\in \mathbb{R}_{\geq 0}$ such that \[\bm{\phi}_{\bm{x}_0}(t) \in \partial \mathcal{R}\bigwedge \forall \tau\in [0,t). \bm{\phi}_{\bm{x}_0}(\tau) \in \mathcal{R} \setminus  \mathcal{X}_{r}.\] From \eqref{con3_0}, we have that \[v(\bm{\phi}_{\bm{x}_0}(\tau))\geq v(\bm{x}_0)>0\] for $\tau \in [0,t]$, contradicting that $v(\bm{x})= 0$ for $\bm{x}\in \partial \mathcal{R}.$

Therefore, either 
\begin{equation}
\label{alwa0}
\bm{\phi}_{\bm{x}_0}(\tau) \in \mathcal{R} \setminus  \mathcal{X}_{r}, \forall \tau \in \mathbb{R}_{\geq 0}
\end{equation}
 or 
\begin{equation}
\label{alwa10}
\exists t \in  \mathbb{R}_{\geq 0}. \bm{\phi}_{\bm{x}_0}(t)\in \mathcal{X}_{r} \bigwedge \forall \tau \in [0,t].\bm{\phi}_{\bm{x}_0}(\tau) \in \mathcal{R}
\end{equation}
 holds.

Assume that \eqref{alwa0} holds. 
From \eqref{con3_0}, we have that
\begin{equation}
\label{mono10}
v(\bm{\phi}_{\bm{x}_0}(\tau))\geq v(\bm{x}_0)>0, \forall \tau \in \mathbb{R}_{\geq 0}.
\end{equation}

From \eqref{con4_0}, we have that for $\tau \in \mathbb{R}_{\geq 0}$,
\begin{equation}
\label{ineq10}
\begin{split}
v(\bm{\phi}_{\bm{x}_0}(\tau))\leq \bigtriangledown_{\bm{x}}w(\bm{x}) \cdot \bm{f}(\bm{x})\mid_{\bm{x}=\bm{\phi}_{\bm{x}_0}(\tau)}.
\end{split}
\end{equation}
 Thus, 
 \[w(\bm{\phi}_{\bm{x}_0}(t))-w(\bm{x}_0)\geq t v(\bm{x}_0), \forall t\in \mathbb{R}_{\geq 0}, \]
 implying that \[\lim_{t\rightarrow +\infty}w(\bm{\phi}_{\bm{x}_0}(t))=+\infty.\] 
 Since $w(\cdot):\overline{\mathcal{X}}\rightarrow \mathbb{R}$ is continuously differentiable and $\overline{\mathcal{X}}$ is compact, $w(\cdot):\overline{\mathcal{X}}\rightarrow \mathbb{R}$ is bounded. This is a contradiction. Thus, \eqref{alwa10} holds, implying that $\bm{x} \in \mathcal{RA}$ and thus $\mathcal{R}\subseteq \mathcal{RA}$.
\end{proof}

According to Proposition \ref{inequality}, the reach-avoid property in the sense of Definition \ref{RNS} can be verified via searching for a feasible solution $v(\bm{x})$ to the set of constraints \eqref{con4_00}-\eqref{con5_0}. Also, from the proof of Proposition \ref{inequality}, we find that the set $\mathcal{R}=\{\bm{x}\in \overline{\mathcal{X}}\mid v(\bm{x})>0\}$ will also be an invariant for system \textbf{CTolS} until it enters the target set $\mathcal{X}_r$, and $\mathcal{R}\cap \mathcal{X}_r\neq \emptyset$. Aspired by the notion of asymptotic stability in stability analysis, if system \textbf{CTols} starting from the set $\mathcal{R}=\{\bm{x}\in \overline{\mathcal{X}}\mid v(\bm{x})>0\}$ with $v(\bm{x})$ satisfying constraint \eqref{con3110}-\eqref{con511}, enters the target set $\mathcal{X}_r$ at an exponential rate, then system \textbf{CTols} starting from the set $\mathcal{R}=\{\bm{x}\in \overline{\mathcal{X}}\mid v(\bm{x})>0\}$ with $v(\bm{x})$ satisfying constraint \eqref{con4_00}-\eqref{con5_0}, would enter the target set $\mathcal{X}_r$ in an asymptotic sense. This is the reason that we term a function $v(\bm{x})$ satisfying constraint \eqref{con4_00}-\eqref{con5_0} asymptotic guidance-barrier function.

Comparing constraints \eqref{con3110}-\eqref{con511} and \eqref{con4_00}-\eqref{con5_0}, it is easy to find that an auxillary function $w(\bm{x})$ is introduced when $\beta=0$ in constraint \eqref{con311}. This constraint having $w(\bm{x})$, i.e., \eqref{con4_0}, excludes trajectories starting from $\mathcal{X}_0$ and staying inside $\mathcal{X}\setminus \mathcal{X}_r$ for ever. On the other hand, constraint \eqref{con3_0} keeps trajectories, starting from $\mathcal{R}$, inside it before entering the target set $\mathcal{X}_r$. Also, we observe that constraint \eqref{con3110}-\eqref{con511} can be derived from the set of constraints \eqref{con4_00}-\eqref{con5_0} via taking $w(\bm{x})=\frac{1}{\beta}v(\bm{x})$. In such circumstances, constraint \eqref{con3_0} is redundant and should be removed. Further, if there exists a function 
$v(\bm{x})$ satisfying constraint \eqref{exponential0}, there must exist a function $w(\bm{x})$, which can take $\frac{1}{\beta}v(\bm{x})$ for instance, such that 
\begin{equation}
\label{asymp}
\begin{split}
& \bigtriangledown_{\bm{x}}v(\bm{x})\cdot \bm{f}(\bm{x})\geq 0, \forall \bm{x}\in \overline{\mathcal{R}\setminus \mathcal{X}_r},\\
&v(\bm{x})-\bigtriangledown_{\bm{x}}w(\bm{x})\cdot \bm{f}(\bm{x})\leq 0, \forall \bm{x}\in \overline{\mathcal{R}\setminus \mathcal{X}_r}
\end{split}
\end{equation}
holds. Thus, constraint \eqref{asymp} is more expressive than \eqref{exponential0} and consequently is more likely to produce less conservative set $\mathcal{R}$. We in the following continue to use the scenario in Example \ref{illu1} to illustrate this.

\begin{example}
\label{illu2}
Consider the scenario in Example \ref{illu1} again. Similarly, we also solve constraint \eqref{con4_00}-\eqref{con5_0} via relaxing it into a semi-definite program, which is shown in Appendix. The reach-avoid property is verified when polynomials of degree $12$ in the resulting semi-definite program are taken. The computed $\mathcal{R}$ is showcased in Fig. \ref{illu_eps1}.
 
\begin{figure}[htb!]
\center
\includegraphics[width=2.5in,height=2.0in]{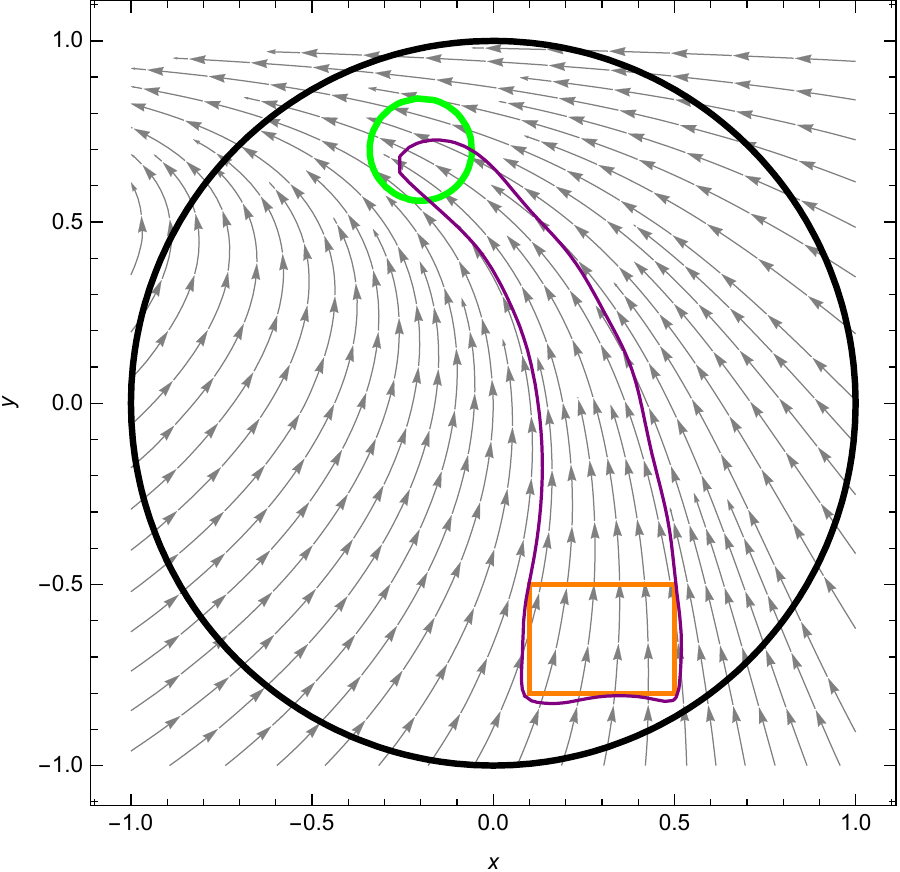} 
\caption{Green, orange and black curve- $\partial \mathcal{X}_r$, $\partial \mathcal{X}_0$ and $\partial \mathcal{X}$; purple curve - $\partial\mathcal{R}$ computed via solving constraint \eqref{con4_00}-\eqref{con5_0}.}
\label{illu_eps1}
\end{figure}
\end{example}

\begin{remark}
As done in verifying invariance of a set using barrier certificate methods \cite{ames2019control}, one simple application scenario, reflecting the advantage of constraint \eqref{asymp} over \eqref{exponential0} further, is on verifying whether trajectories starting from a given open set $\widehat{\mathcal{R}}$, which may be designed a priori via the Monte-Carlo simulation method, will enter the target set $\mathcal{X}_r$ eventually while staying inside it before the first target hit, where \[\widehat{\mathcal{R}}=\{\bm{x}\in \mathbb{R}^n\mid \widehat{v}(\bm{x})>0\}\] with $\widehat{v}(\bm{x})$ being continuously differentiable and $\mathcal{X}_r\cap \widehat{\mathcal{R}}\neq \emptyset$. We are inclined to verifying whether there exists a continuously differentiable function $w(\bm{x})$ satisfying
\begin{equation*}
\begin{split}
& \bigtriangledown_{\bm{x}}\widehat{v}(\bm{x})\cdot \bm{f}(\bm{x})\geq 0, \forall \bm{x}\in \overline{\widehat{\mathcal{R}}\setminus \mathcal{X}_r},\\
&\widehat{v}(\bm{x})-\bigtriangledown_{\bm{x}}w(\bm{x})\cdot \bm{f}(\bm{x})\leq 0, \forall \bm{x}\in \overline{\widehat{\mathcal{R}}\setminus \mathcal{X}_r},
\end{split}
\end{equation*}
 instead of verifying whether there exists $\beta>0$ such that
\begin{equation*}
\begin{split}
\bigtriangledown_{\bm{x}}\widehat{v}(\bm{x})\cdot \bm{f}(\bm{x})\geq \beta \widehat{v}(\bm{x}), \forall \bm{x}\in \overline{\widehat{\mathcal{R}}\setminus \mathcal{X}_r},
\end{split}
\end{equation*}
because the former is more expressive than the latter.
\end{remark}

Although there are some benefits on the use of constraints \eqref{con4_00}-\eqref{con5_0} over \eqref{con3110}-\eqref{con511} for the reach-avoid verification, there is still a defect caused by constraint \eqref{con3_0}, possibly limiting the application of constraint \eqref{con4_00}-\eqref{con5_0} to some extent. Unlike constraint \eqref{con311} in \eqref{con3110}-\eqref{con511}, constraint \eqref{con3_0} not only requires the Lie derivative of function $v(\bm{x})$ along the flow of system \textbf{CTolS} to be non-negative over the set $\overline{\mathcal{R}\setminus \mathcal{X}_r}$, but also over $\overline{\mathcal{X}\setminus (\mathcal{X}_r\cup \mathcal{R}})$. One simple solution to remedy this defect is to combine constraints \eqref{con4_00}-\eqref{con5_0} and \eqref{con3110}-\eqref{con511} together, and obtain a set of constraints which is more expressive. These constraints are presented in Proposition \ref{inequality_new}.
\begin{proposition}
\label{inequality_new}
 If there exist continuously differentiable functions $v_1(\bm{x}), v_2(\bm{x}) :\overline{\mathcal{X}}\rightarrow \mathbb{R}$ and $w(\bm{x}):\overline{\mathcal{X}}\rightarrow \mathbb{R}$ satisfying 
\begin{align}
& v_1(\bm{x})+v_2(\bm{x})>0, \forall \bm{x}\in \mathcal{X}_0, \label{con4_000}\\
&\bigtriangledown_{\bm{x}}v_1(\bm{x}) \cdot \bm{f}(\bm{x})\geq 0, \forall \bm{x}\in \overline{\mathcal{X}\setminus \mathcal{X}_r}, \label{con3_00} \\
&v_1(\bm{x})-\bigtriangledown_{\bm{x}}w(\bm{x}) \cdot \bm{f}(\bm{x})\leq 0, \forall \bm{x}\in \overline{\mathcal{X}\setminus \mathcal{X}_r},\label{con4_0_0} \\
&v_1(\bm{x})\leq 0, \forall \bm{x}\in \partial \mathcal{X},\label{con5_00}\\
&\bigtriangledown_{\bm{x}}v_2(\bm{x}) \cdot \bm{f}(\bm{x})\geq \beta v_2(\bm{x}), \forall \bm{x}\in \overline{\mathcal{X}\setminus \mathcal{X}_r}, \label{con6_00} \\
&v_2(\bm{x})\leq 0, \forall \bm{x}\in \partial \mathcal{X},\label{con7_00}
\end{align}
where $\beta\in (0,+\infty)$, then the reach-avoid property in the sense of Definition \ref{RNS} holds.
\end{proposition}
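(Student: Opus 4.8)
The plan is to reduce, exactly as in the proof of Proposition \ref{inequality}, to showing that the set $\mathcal{R}:=\{\bm{x}\in\overline{\mathcal{X}}\mid v_1(\bm{x})+v_2(\bm{x})>0\}$ satisfies $\mathcal{R}\subseteq\mathcal{RA}$. Constraint \eqref{con4_000} gives $\mathcal{X}_0\subseteq\mathcal{R}$, and \eqref{con5_00} together with \eqref{con7_00} gives $v_1+v_2\leq 0$ on $\partial\mathcal{X}$, so $\mathcal{R}\subseteq\mathcal{X}$; hence $\mathcal{R}\subseteq\mathcal{RA}$ yields $\mathcal{X}_0\subseteq\mathcal{RA}$ and the reach-avoid property of Definition \ref{RNS}. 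First I would fix $\bm{x}_0\in\mathcal{R}\setminus\mathcal{X}_r$ (the case $\bm{x}_0\in\mathcal{X}_r$ being immediate) and consider its switched-system trajectory $\widehat{\bm{\phi}}_{\bm{x}_0}$, which by the trichotomy of Subsection \ref{ISS} is of exactly one of the three types. The goal is to exclude types 2 and 3, leaving type 1, i.e.\ $\bm{x}_0\in\mathcal{RA}$.

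The main obstacle, and the reason the single-function argument of Proposition \ref{inequality} does not transfer verbatim, is that the sum $v_1+v_2$ is neither non-decreasing nor exponentially expanding along the flow: its Lie derivative is only bounded below by $\beta v_2$, so $\mathcal{R}$ need not be invariant for \textbf{CTolS}. The remedy is to propagate the two channels $v_1$ and $v_2$ \emph{separately}. While $\widehat{\bm{\phi}}_{\bm{x}_0}(t)$ stays in $\overline{\mathcal{X}\setminus\mathcal{X}_r}$ (where the switched and original dynamics coincide, since there $1_{\mathcal{X}}\equiv 1$), constraint \eqref{con3_00} makes $v_1(\widehat{\bm{\phi}}_{\bm{x}_0}(t))$ non-decreasing, hence $v_1(\widehat{\bm{\phi}}_{\bm{x}_0}(t))\geq v_1(\bm{x}_0)$, while constraint \eqref{con6_00} gives $\frac{d}{dt}\bigl(e^{-\beta t}v_2(\widehat{\bm{\phi}}_{\bm{x}_0}(t))\bigr)\geq 0$ and therefore $v_2(\widehat{\bm{\phi}}_{\bm{x}_0}(t))\geq e^{\beta t}v_2(\bm{x}_0)$.

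To exclude type 3 (the trajectory remaining in $\mathcal{X}\setminus\mathcal{X}_r$ for all time), I would use that $v_1(\bm{x}_0)+v_2(\bm{x}_0)>0$ forces at least one summand to be strictly positive. If $v_2(\bm{x}_0)>0$, then $v_2(\widehat{\bm{\phi}}_{\bm{x}_0}(t))\geq e^{\beta t}v_2(\bm{x}_0)\to+\infty$, contradicting the boundedness of the continuous function $v_2$ on the compact set $\overline{\mathcal{X}}$. Otherwise $v_1(\bm{x}_0)>0$, and integrating \eqref{con4_0_0} along the flow yields $w(\widehat{\bm{\phi}}_{\bm{x}_0}(t))-w(\bm{x}_0)\geq\int_0^t v_1(\widehat{\bm{\phi}}_{\bm{x}_0}(\tau))\,d\tau\geq t\,v_1(\bm{x}_0)\to+\infty$, contradicting the boundedness of $w$; this is precisely the mechanism already exploited in Proposition \ref{inequality}.

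To exclude type 2 (the trajectory hitting $\partial\mathcal{X}$ at some finite $\tau'$ without ever entering $\mathcal{X}_r$), I would apply the two propagation bounds on $[0,\tau')$ and pass to the limit $t\to\tau'^-$ using continuity of $v_1,v_2$, obtaining $v_1(\widehat{\bm{\phi}}_{\bm{x}_0}(\tau'))\geq v_1(\bm{x}_0)$ and $v_2(\widehat{\bm{\phi}}_{\bm{x}_0}(\tau'))\geq e^{\beta\tau'}v_2(\bm{x}_0)$. Since $\widehat{\bm{\phi}}_{\bm{x}_0}(\tau')\in\partial\mathcal{X}$, constraints \eqref{con5_00} and \eqref{con7_00} force both left-hand values to be $\leq 0$, so $v_1(\bm{x}_0)\leq 0$ and, as $e^{\beta\tau'}>0$, also $v_2(\bm{x}_0)\leq 0$, whence $v_1(\bm{x}_0)+v_2(\bm{x}_0)\leq 0$, contradicting $\bm{x}_0\in\mathcal{R}$. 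Only type 1 remains: the trajectory enters $\mathcal{X}_r$ in finite time without leaving $\mathcal{X}$, and because it agrees with $\bm{\phi}_{\bm{x}_0}$ up to that instant, $\bm{x}_0\in\mathcal{RA}$. I expect the two boundary-limit arguments (type 2) and the sign-splitting of the sum (types 2 and 3) to be the only delicate points; the rest is the routine comparison/integration bookkeeping assembled above.
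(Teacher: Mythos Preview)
Your proposal is correct and follows essentially the same route as the paper. The paper's proof is a two-liner: from $v_1(\bm{x}_0)+v_2(\bm{x}_0)>0$ one has $v_1(\bm{x}_0)>0$ or $v_2(\bm{x}_0)>0$, and then one simply invokes (the proofs of) Propositions \ref{inequality1} and \ref{inequality} respectively, since $v_2$ by itself satisfies \eqref{con311}--\eqref{con511} and $v_1$ by itself satisfies \eqref{con3_0}--\eqref{con5_0}; you unroll those two invocations explicitly via the type~2/type~3 trichotomy, which is exactly what those earlier proofs do.
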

\begin{proof}
Let $\bm{x}_0\in \mathcal{R}=\{\bm{x}\in \overline{\mathcal{X}}\mid v_1(\bm{x})+v_2(\bm{x})>0\}$, we have that $\bm{x}_0\in \{\bm{x}\in \overline{\mathcal{X}}\mid v_1(\bm{x})>0\}$ or $\bm{x}_0\in \{\bm{x}\in \overline{\mathcal{X}}\mid v_2(\bm{x})>0\}$. Following Proposition \ref{inequality1} and \ref{inequality}, we have the conclusion.
\end{proof}

Due to constraint \eqref{con4_000}, $v_1(\bm{x})$ may not be an asymptotic guidance-barrier function satisfying constraint \eqref{con4_00}-\eqref{con5_0}. Similarly,  $v_2(\bm{x})$ may not be an exponential guidance-barrier function satisfying constraint \eqref{con3110}-\eqref{con511}. Thus, constraint \eqref{con4_000}-\eqref{con7_00} is weaker than both of constraints \eqref{con4_00}-\eqref{con5_0} and \eqref{con3110}-\eqref{con511}. The set $\mathcal{R}=\{\bm{x}\in \overline{\mathcal{X}}\mid v_1(\bm{x})+v_2(\bm{x})>0\}$ is a mix of states  entering the target set $\mathcal{X}_r$ at an exponential rate and ones entering the target set $\mathcal{X}_r$ at an asymptotic rate, thus we term $v_1(\bm{x})+v_2(\bm{x})$ asymptotic guidance-barrier function. However, we cannot guarantee that the set $\mathcal{R}$ still satisfies $\mathcal{R}\cap \mathcal{X}_r\neq \emptyset$ and it is an invariant for system \textbf{CTolS} until it enters the target set $\mathcal{X}_r$. If an initial state $\bm{x}_0 \in \mathcal{X}_0$ is a state such that $v_2(\bm{x})>0$, then the trajectory starting from it will stay inside the set $\mathcal{R}$ until it enters the target set $\mathcal{X}_r$, since 
\begin{equation*}
    \begin{split}
&\frac{d (v_1+v_2)}{d t}=\bigtriangledown_{\bm{x}}v_1(\bm{x}) \cdot \bm{f}(\bm{x})+\bigtriangledown_{\bm{x}}v_2(\bm{x}) \cdot \bm{f}(\bm{x})\\
&\geq \beta v_2(\bm{x}), \forall \bm{x}\in \overline{\mathcal{X}\setminus \mathcal{X}_r}
\end{split}
\end{equation*}
holds; otherwise, we cannot have such a conclusion.  Instead, we have that $\mathcal{R}_i=\{\bm{x}\in \overline{\mathcal{X}}\mid v_i(\bm{x})>0\}$ satisfies $\mathcal{R}_i\cap \mathcal{X}_r\neq \emptyset$ and is an invariant for system \textbf{CTolS} until it enters the target set $\mathcal{X}_r$, if $\mathcal{R}_i\neq \emptyset$, where $i\in\{1,2\}$. Let's illustrate this via an example. 

\begin{example}
\label{illu4}
Consider the scenario in Example \ref{illu1} again. Similarly, we  solve constraint \eqref{con4_000}-\eqref{con7_00} with $\beta=2$ via relaxing it into a semi-definite program, which is shown in Appendix. The reach-avoid property is verified when polynomials of degree $12$ are taken. The computed $\mathcal{R}$ is shown in Fig. \ref{illu_eps3}. For this case, the set $\mathcal{R}_2=\{\bm{x}\in \overline{\mathcal{X}}\mid v_2(\bm{x})>0\}$ is empty. It is observed from Fig. \ref{illu_eps3} that the set $\mathcal{R}$ does not intersect $\mathcal{X}_r$, and system \textbf{CTolS} leaves it before entering the target set $\mathcal{X}_r$. However, the set $\mathcal{R}_1$ is an invariant for system \textbf{CTolS} until it enters the target set $\mathcal{X}_r$ and $\mathcal{R}_1\cap \mathcal{X}_r\neq \emptyset$.

\begin{figure}[htb!]
\center
\includegraphics[width=2.5in,height=2.0in]{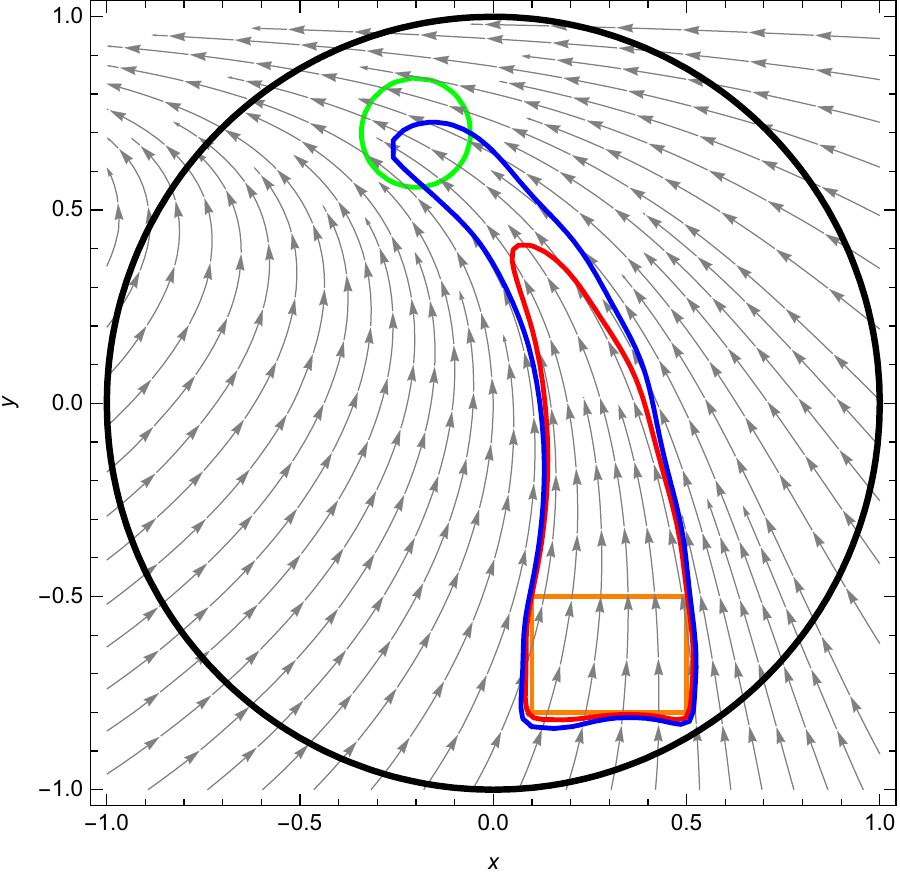} 
\caption{Green, orange and black curve- $\partial \mathcal{X}_r$, $\partial \mathcal{X}_0$ and $\partial \mathcal{X}$; blue and red curve - $\partial\mathcal{R}_1$ and $\partial\mathcal{R}$   computed via solving constraint \eqref{con4_000}-\eqref{con7_00}.}
\label{illu_eps3}
\end{figure}
\end{example}

The other more sophisticated solution of enhancing constraint \eqref{con4_00}-\eqref{con5_0} is to replace constraint \eqref{con3_0} with \[\bigtriangledown_{\bm{x}}v(\bm{x}) \cdot \bm{f}(\bm{x})\geq \alpha(\bm{x}), \forall \bm{x}\in \overline{\mathcal{X}\setminus \mathcal{X}_r},\] where $\alpha(\cdot):\overline{\mathcal{X}}\rightarrow \mathbb{R}$ is a continuous function satisfying $\alpha(\bm{x})\geq 0$ over $\{\bm{x} \in \overline{\mathcal{X}\setminus \mathcal{X}_r} \mid v(\bm{x})\geq 0\}$. One instance for $\alpha(\bm{x})$ is $\beta(\bm{x})v(\bm{x})$, where $\beta(\cdot): \overline{\mathcal{X}}\rightarrow [0,+\infty)$. The new constraints are formulated in Proposition \ref{new}.
\begin{proposition}
\label{new}
If there exists  a continuously differentiable function $v(\bm{x}):\overline{\mathcal{X}}\rightarrow \mathbb{R}$, a continuous function $\alpha(\cdot):\mathbb{R}^n\rightarrow \mathbb{R}$ satisfying $\alpha(\bm{x})\geq 0$ over $\{\bm{x} \in \overline{\mathcal{X}\setminus \mathcal{X}_r} \mid v(\bm{x})\geq 0\}$, and a continuously differentiable function $w(\bm{x}):\overline{\mathcal{X}}\rightarrow \mathbb{R}$ satisfying 
\begin{align}
& v(\bm{x})>0, \forall \bm{x}\in \mathcal{X}_0, \label{con4_00_new}\\
&\bigtriangledown_{\bm{x}}v(\bm{x}) \cdot \bm{f}(\bm{x})\geq \alpha(\bm{x}), \forall \bm{x}\in \overline{\mathcal{X}\setminus \mathcal{X}_r}, \label{con3_0_new} \\
&v(\bm{x})-\bigtriangledown_{\bm{x}}w(\bm{x}) \cdot \bm{f}(\bm{x})\leq 0, \forall \bm{x}\in \overline{\mathcal{X}\setminus \mathcal{X}_r},\label{con4_0_new} \\
&v(\bm{x})\leq 0, \forall \bm{x}\in \partial \mathcal{X},\label{con5_0_new}
\end{align}
then the reach-avoid property in the sense of Definition \ref{RNS} holds.
\end{proposition}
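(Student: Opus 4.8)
The plan is to mirror the proof of Proposition~\ref{inequality}, since the only structural change is that the constant lower bound $0$ in constraint \eqref{con3_0} has been replaced by the state-dependent bound $\alpha(\bm{x})$ in \eqref{con3_0_new}. As before, I would set $\mathcal{R}=\{\bm{x}\in \overline{\mathcal{X}}\mid v(\bm{x})>0\}$ and aim to establish $\mathcal{R}\subseteq \mathcal{RA}$; the reach-avoid property then follows from \eqref{con4_00_new}, which gives $\mathcal{X}_0\subseteq \mathcal{R}\subseteq \mathcal{RA}$. Fix $\bm{x}_0\in \mathcal{R}$; constraint \eqref{con5_0_new} forces $\bm{x}_0\in \mathcal{X}$, the case $\bm{x}_0\in \mathcal{X}_r$ is immediate, so I would concentrate on $\bm{x}_0\in \mathcal{R}\setminus \mathcal{X}_r$.

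The pivotal observation is that the sign hypothesis on $\alpha$ --- namely $\alpha(\bm{x})\geq 0$ on $\{\bm{x}\in \overline{\mathcal{X}\setminus \mathcal{X}_r}\mid v(\bm{x})\geq 0\}$ --- holds exactly on the region through which the trajectory of interest travels. As long as $\bm{\phi}_{\bm{x}_0}(\tau)$ remains in $\mathcal{R}\setminus \mathcal{X}_r$ we have $v(\bm{\phi}_{\bm{x}_0}(\tau))>0\geq 0$, so $\alpha(\bm{\phi}_{\bm{x}_0}(\tau))\geq 0$, and constraint \eqref{con3_0_new} yields
\[\frac{d}{d\tau}v(\bm{\phi}_{\bm{x}_0}(\tau))=\bigtriangledown_{\bm{x}}v(\bm{x})\cdot \bm{f}(\bm{x})\mid_{\bm{x}=\bm{\phi}_{\bm{x}_0}(\tau)}\geq \alpha(\bm{\phi}_{\bm{x}_0}(\tau))\geq 0.\]
Hence $v$ is non-decreasing along the trajectory, exactly as in the $\alpha\equiv 0$ case. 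This recovers the invariance step: were the trajectory to reach $\partial \mathcal{R}$ (where $v=0$) before hitting $\mathcal{X}_r$, monotonicity would force $v(\bm{\phi}_{\bm{x}_0}(t))\geq v(\bm{x}_0)>0$, a contradiction. Thus either the trajectory stays in $\mathcal{R}\setminus \mathcal{X}_r$ for all time, or it enters $\mathcal{X}_r$ while remaining inside $\mathcal{R}$.

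To rule out the first alternative I would reuse the unboundedness argument essentially verbatim. On an orbit that stays in $\mathcal{R}\setminus \mathcal{X}_r$ forever, monotonicity gives $v(\bm{\phi}_{\bm{x}_0}(\tau))\geq v(\bm{x}_0)>0$ for all $\tau$, while constraint \eqref{con4_0_new} gives $v(\bm{\phi}_{\bm{x}_0}(\tau))\leq \bigtriangledown_{\bm{x}}w(\bm{x})\cdot \bm{f}(\bm{x})\mid_{\bm{x}=\bm{\phi}_{\bm{x}_0}(\tau)}=\frac{d}{d\tau}w(\bm{\phi}_{\bm{x}_0}(\tau))$. Integrating over $[0,t]$ yields $w(\bm{\phi}_{\bm{x}_0}(t))-w(\bm{x}_0)\geq t\,v(\bm{x}_0)\to +\infty$, contradicting the boundedness of the continuously differentiable $w$ on the compact set $\overline{\mathcal{X}}$. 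Therefore the trajectory must enter $\mathcal{X}_r$, giving $\bm{x}_0\in \mathcal{RA}$ and completing the inclusion $\mathcal{R}\subseteq \mathcal{RA}$.

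The one step requiring genuine care --- and the only place the argument departs conceptually from Proposition~\ref{inequality} --- is the \emph{bootstrap} character of the monotonicity claim: the bound $\bigtriangledown_{\bm{x}}v\cdot \bm{f}\geq \alpha\geq 0$ is only available while $v\geq 0$, yet it is precisely this bound that keeps $v$ from dropping below zero. I would make this self-consistent by restricting attention to the maximal time interval on which $\bm{\phi}_{\bm{x}_0}(\tau)\in \mathcal{R}\setminus \mathcal{X}_r$; on this relatively open interval $v>0$ holds by definition, so the sign of $\alpha$ is controlled throughout and no circularity arises, and the continuity of $v$ handles the endpoint where the trajectory would touch $\partial \mathcal{R}$. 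Once this technicality is dispatched, the remainder is identical to the established argument.
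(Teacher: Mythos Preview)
Your proposal is correct and follows essentially the same approach as the paper: the paper's proof simply observes that whenever $\bm{\phi}_{\bm{x}_0}(t)\in \overline{\mathcal{R}\setminus \mathcal{X}_r}$ one has $\bigtriangledown_{\bm{x}}v\cdot\bm{f}\geq 0$ (because $v\geq 0$ there makes $\alpha\geq 0$), and then refers the reader back to the proof of Proposition~\ref{inequality}. Your write-up spells out the invariance and unboundedness steps in full and is more explicit about the bootstrap subtlety than the paper is, but the underlying argument is identical.
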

\begin{proof}
Let $\mathcal{R}=\{\bm{x}\in \overline{\mathcal{X}}\mid v(\bm{x})>0\}$. 
From constraint \eqref{con3_0_new}, we have  that if $\bm{\phi}_{\bm{x}_0} (t) \in \overline{\mathcal{R}\setminus \mathcal{X}_r}$, \[\bigtriangledown_{\bm{x}}v(\bm{x}) \cdot \bm{f}(\bm{x})\mid_{\bm{x}=\bm{\phi}_{\bm{x}_0} (t)}\geq 0.\] Then, following the arguments in the proof of Proposition \ref{inequality}, we have the conclusion. 
\end{proof}

Constraint \eqref{con4_00_new}-\eqref{con5_0_new} is less strict than constraint \eqref{con4_00}-\eqref{con5_0} in that the former only requires the Lie derivative of $v(\bm{x})$ along the flow of system \textbf{CTolS} to be non-negative over the set $\overline{\mathcal{R}\setminus \mathcal{X}_r}$ rather than $\overline{\mathcal{X}\setminus \mathcal{X}_r}$, due to constraint \eqref{con3_0_new}. Constraint \eqref{con4_00_new}-\eqref{con5_0_new} does not impose any restrictions on the Lie derivative of $v(\bm{x})$ along the flow of system \textbf{CTolS} over the set $\overline{\mathcal{X}\setminus \mathcal{R}}$. Moreover, it is more expressive since it degenerates to constraint \eqref{con4_00}-\eqref{con5_0} when $\alpha(\cdot)\equiv 0$, and it is more expressive than constraint \eqref{con3110}-\eqref{con511}, since the former degenerates to the latter when \[\alpha(\bm{x})=\beta v(\bm{x})\] and \[w(\bm{x})=\frac{1}{\beta} v(\bm{x}).\] 
Besides, the set $\mathcal{R}$ obtained via solving constraint \eqref{con4_00_new}-\eqref{con5_0_new} will be an invariant for system \textbf{CTolS} until it enters the target set $\mathcal{X}_r$, and $\mathcal{R}\cap \mathcal{X}_r\neq \emptyset$.

\begin{example}
\label{illu4}
Consider the system in Example \ref{illu1} 
with $\mathcal{X}=\{(x,y)^{\top}\mid x^2 + y^2 - 1 <0\}$, $\mathcal{X}_r=\{(x,y)^{\top}\mid (x+0.2)^2 + (y -0.7)^2 - 0.02<0\}$ and $\mathcal{X}_0=\{(x,y)^{\top}\mid 0.1-x<0,x-0.5<0,-0.8-y<0,y+0.4<0\}$.

In this example we use $\alpha(v(\bm{x}))=x^2 v(\bm{x})$ to illustrate the benefits of constraints \eqref{con4_00_new}-\eqref{con5_0_new} on the reach-avoid verification. The constraint \eqref{con4_00_new}-\eqref{con5_0_new} is solved by relaxing it into a semi-definite program which is presented in Appendix. The degree of all polynomials in the resulting semi-definite program is taken the same and in order of $\{2,4,6,8,10,\ldots,20\}$. When the reach-avoid property is verified successfully, the computations terminate. The degree is $12$ for termination, and the computed set $\mathcal{R}$ is showcased in Fig. \ref{illu_eps4}. In contrast, the degree is $14$ when using constraints \eqref{con3110}-\eqref{con511} with $\beta=0.1$ and \eqref{con4_00}-\eqref{con5_0} to verify the reach-avoid property. Consequently, these experiments further support our theoretical analysis  that constraint \eqref{con4_00_new}-\eqref{con5_0_new} is more expressive and provides more chances for verifying the reach-avoid property successfully.
 
\begin{figure}[htb!]
\center
\includegraphics[width=2.5in,height=2.0in]{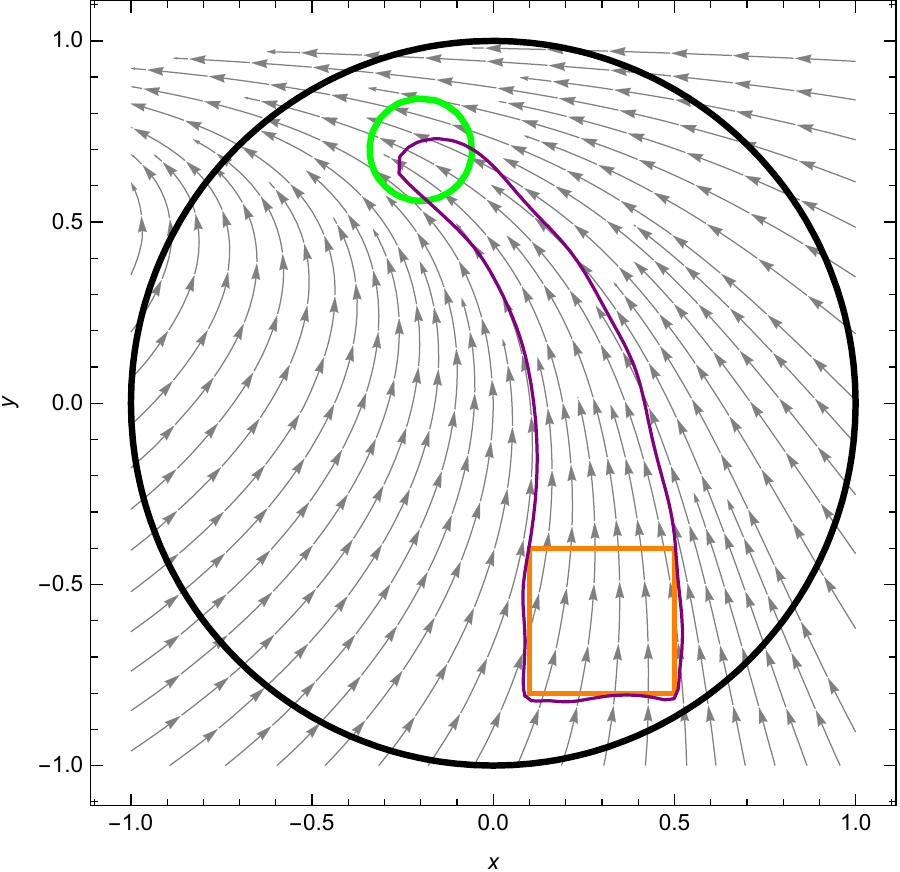} 
\caption{Green, orange and black curve- $\partial \mathcal{X}_r$, $\partial \mathcal{X}_0$ and $\partial \mathcal{X}$; purple curve - $\partial\mathcal{R}$,which is computed via solving constraint \eqref{con4_00_new}-\eqref{con5_0_new}.}
\label{illu_eps4}
\end{figure}

\end{example}

\section{Experiments}
\label{ex}
We further demonstrate the theoretical development and performance of the proposed methods on several examples, i.e., Examples \ref{ex1}-\ref{dubin}. All of constraints \eqref{041}-\eqref{043}, \eqref{con3110}-\eqref{con511}, \eqref{con4_00}-\eqref{con5_0}, \eqref{con4_000}-\eqref{con7_00} and \eqref{con4_00_new}-\eqref{con5_0_new} are addressed via encoding them into semi-definite programs. The formulated semi-definite programs are presented in Appendix. In the computations, the degree of unknown polynomials in the resulting semi-definite programs is taken the same and in order of $\{2,4,6,8,10,\ldots,20\}$. When the reach-avoid property is verified successfully, the computations terminate. A return of `Successfully solved (MOSEK)' from YALMIP will denote that a feasible solution is found, and the reach-avoid property is successfully verified.

\begin{example}
\label{ex1}
Consider the scenario in Example \ref{illu4}. As analyzed in Subsection \ref{largerzero}, the smaller the discount factor $\beta$ is in constraint  \eqref{con3110}-\eqref{con511}, the more likely the reach-avoid property is able to be verified. Thus, in this example, we supplement some experiments involving constraint  \eqref{con3110}-\eqref{con511} with $\beta<0.1$ for more comprehensive and fair comparisons with the proposed methods in the present work. In these experiments, $\beta=10^{-2},10^{-3},10^{-4},10^{-5},10^{-6}$ are used. For all of these experiments, the computations terminate when the degree takes $14$. All the results, including the ones in Example \ref{illu4}, further validate the benefits of constraint \eqref{con4_00_new}-\eqref{con5_0_new} over constraints \eqref{con4_00}-\eqref{con5_0} and \eqref{con3110}-\eqref{con511} in terms of stronger expressiveness. 

We also experimented using constraint \eqref{041}-\eqref{043}, and the reach-avoid property is verified when the degree is 14. 
\end{example}

\begin{example}[Van der Pol Oscillator]
\label{ex2}
Consider the reversed-time Van der Pol oscillator given by 
\begin{equation*}
\begin{cases}
&\dot{x}=-2y\\
&\dot{y}=0.8x+10(x^2-0.21)y
\end{cases}
\end{equation*}
with $\mathcal{X}=\{(x,y)^{\top}\mid x^2+y^2-1<0\}$,  $\mathcal{X}_0=\{(x,y)^{\top}\mid -x<-0.6,x<0.8,-y<0,y<0.2\}$ and $\mathcal{X}_r=\{(x,y)^{\top}\mid x^2+y^2-0.01<0\}$.

 The reach-avoid property is verified when the degree is 8, 8 and 12 for constraints \eqref{041}-\eqref{043}, \eqref{con4_00}-\eqref{con5_0} and \eqref{con3110}-\eqref{con511} with $\beta=0.1$, respectively. We did not obtain any positive verification result from constraint \eqref{con3110}-\eqref{con511} with $\beta=1$. However, it can be improved by solving constraint \eqref{con4_000}-\eqref{con7_00} with $\beta=1$ and the reach-avoid property is verified when the degree is 8. Further, if constraint \eqref{con4_00_new}-\eqref{con5_0_new} is used with $\alpha(\bm{x})=x^2 v(\bm{x})$, the degree is 6. Some of the computed sets $\mathcal{R}$ are illustrated in Fig. \ref{fig_2}.

Besides, we also experimented using constraints  \eqref{con3110}-\eqref{con511} and \eqref{con4_000}-\eqref{con7_00} with $\beta=10^{-2},10^{-3},10^{-4},10^{-5},10^{-6}$ for more comprehensive and fair comparisons. The computations terminate when the degree takes $8$ for all of these experiments.

\begin{figure}[htb!]
\center
\includegraphics[width=2.5in,height=2.0in]{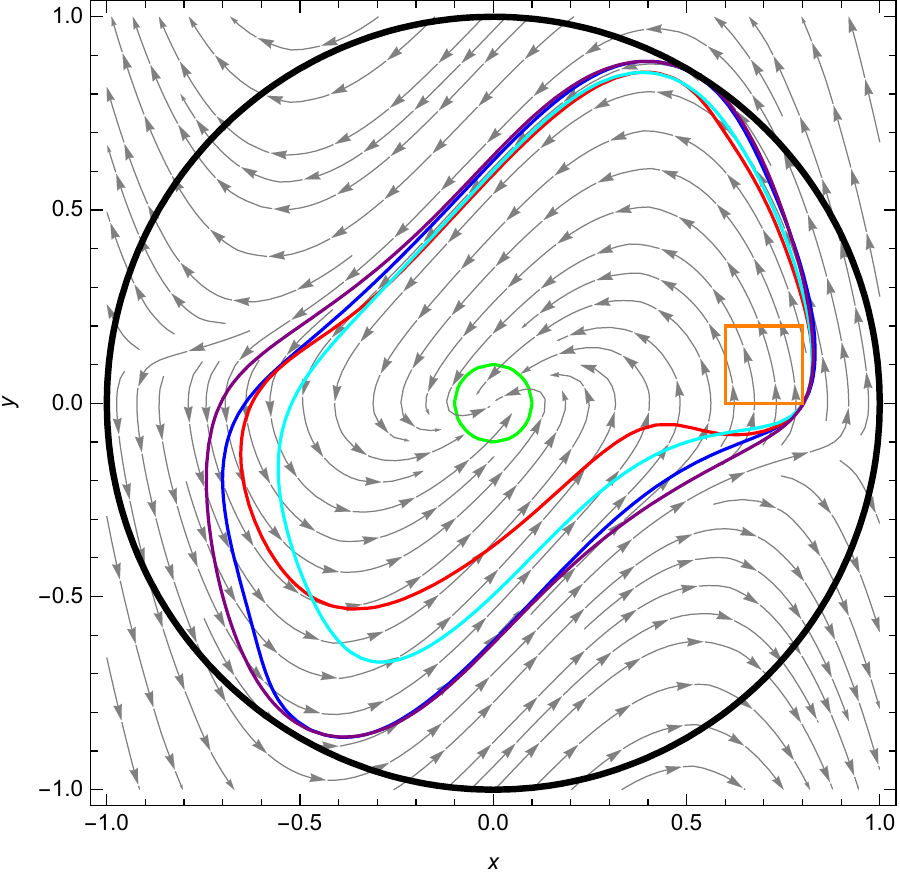} 
\caption{Green, black and orange curve- $\partial \mathcal{X}_r$, $\partial \mathcal{X}$ and $\partial \mathcal{X}_0$; blue, red, cyan and purple curve - $\partial \mathcal{R}$, which is computed via solving constraints \eqref{con4_00}-\eqref{con5_0} when the degree is 8, \eqref{con3110}-\eqref{con511} when the degree is 12 and $\beta=0.1$, \eqref{con4_000}-\eqref{con7_00} when the degree is 8 and $\beta=1$, and \eqref{con4_00_new}-\eqref{con5_0_new} when the degree is 6 and $\alpha(\bm{x})=x^2 v(\bm{x})$.}
\label{fig_2}
\end{figure}

\end{example}

\begin{example}
Consider the following system from \cite{tan2008stability},
\begin{equation*}
    \begin{cases}
           &\dot{x}=-0.42 x - 1.05 y - 2.3 x^2 - 0.56 x y - x^3\\
           &\dot{y}=1.98 x + x y
    \end{cases}
\end{equation*}
with $\mathcal{X}=\{\bm{x}\in \mathbb{R}^2\mid (x)^2 + (y)^2 - 4<0\}$, $\mathcal{X}_0=\{\bm{x}\in \mathbb{R}^2\mid (x - 1.2)^2 + (y - 0.8)^2 - 0.1<0\}$ and $\mathcal{X}_r=\{\bm{x}\in \mathbb{R}^2\mid (x + 1.2)^2 + (y + 0.5)^2 - 0.3<0\}$.

The reach-avoid property in the sense of Definition \ref{RNS} is not verified using constraint \eqref{041}-\eqref{043}. Actually, it cannot be verified via solving constraint \eqref{041}-\eqref{043}, since there exists an equilibrium in the set $\overline{\mathcal{X}\setminus \mathcal{X}_r}$.  

The reach-avoid property is verified when the degree is 10 for constraints \eqref{con4_00}-\eqref{con5_0}, and \eqref{con3110}-\eqref{con511} with $\beta=1$ and $\beta=0.1$. If constraint\eqref{con4_00_new}-\eqref{con5_0_new} is used with $\alpha(\bm{x})=(2-y)v(\bm{x})$, the degree is 8.  Some of the computed sets $\mathcal{R}$ are illustrated in Fig. \ref{fig_4}.

Like Example \ref{ex2}, we also experimented using constraints \eqref{con3110}-\eqref{con511} and \eqref{con4_000}-\eqref{con7_00} with $\beta=10^{-2},10^{-3},10^{-4},10^{-5},10^{-6}$ for more comprehensive and fair comparisons. The computations terminate when the degree takes $10$ for all of these experiments. 

\begin{figure}[htb!]
\center
\includegraphics[width=2.5in,height=2.0in]{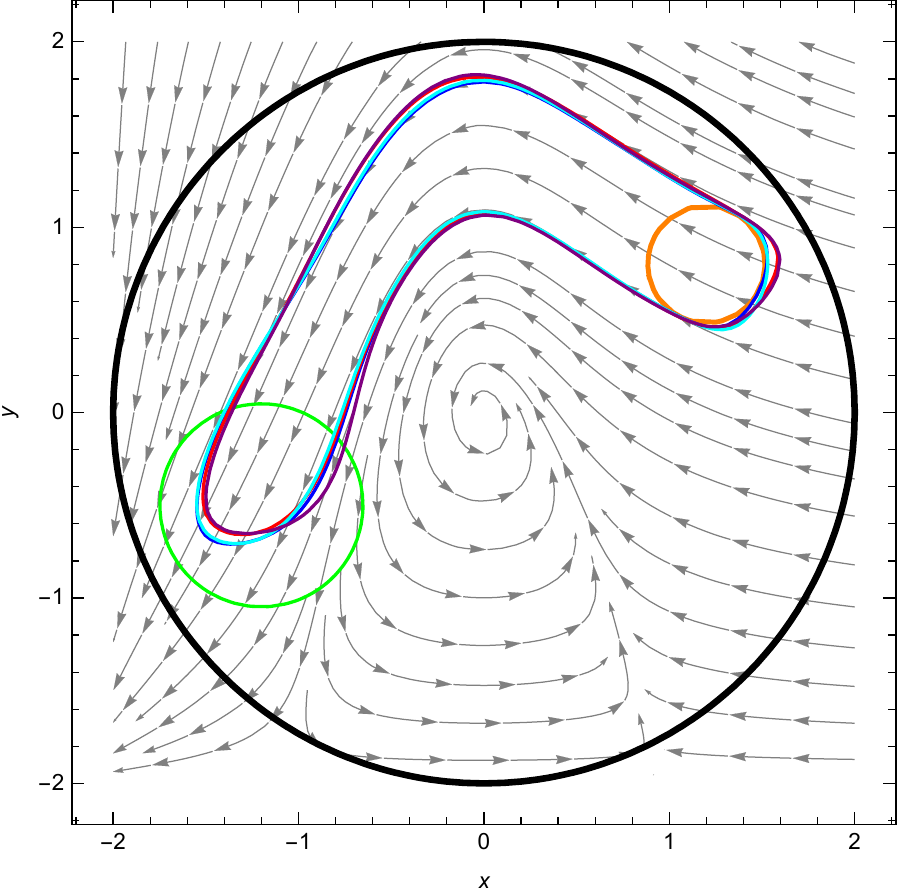} 
\caption{Green, black and orange curve- $\partial \mathcal{X}_r$, $\partial \mathcal{X}$ and $\partial \mathcal{X}_0$; blue, red, cyan and purple curve - $\partial \mathcal{R}$, which is computed via solving constraints \eqref{con4_00}-\eqref{con5_0} when the degree is $10$, \eqref{con3110}-\eqref{con511} when the degree is $10$ and $\beta=1$, \eqref{con3110}-\eqref{con511} when the degree is $10$ and $\beta=0.1$, and\eqref{con4_00_new}-\eqref{con5_0_new} when the degree is $8$ and $\alpha(\bm{x})=(2-y) v(\bm{x})$.}
\label{fig_4}
\end{figure}

\end{example}

\begin{example}
\label{ex7}
Consider the following system from \cite{tan2008stability},
\begin{equation*}
    \begin{cases}
           &\dot{x}=y\\
           &\dot{y}=-(1 - x^2)x - y
    \end{cases}
\end{equation*}
with $\mathcal{X}=\{\bm{x}\in \mathbb{R}^2\mid \frac{x^2}{4}+\frac{y^2}{9}-1<0\}$, $\mathcal{X}_0=\{\bm{x}\in \mathbb{R}^2\mid (x + 1)^2 + (y - 1.5)^2 - 0.25<0\}$ and $\mathcal{X}_r=\{\bm{x}\in \mathbb{R}^2\mid x^2+y^2-0.01<0\}$.

The reach-avoid property is not verified using constraint \eqref{041}-\eqref{043}, and is verified when the degree is 10 for constraint \eqref{con4_00}-\eqref{con5_0}. We did not obtain any positive verification result from solving constraint \eqref{con3110}-\eqref{con511} with $\beta=1$ and $\beta=0.1$. However, this negative situation can be improved by solving constraint \eqref{con4_000}-\eqref{con7_00} with $\beta=1$ and $\beta=0.1$, and the reach-avoid property is verified when the degree is 10. If constraint\eqref{con4_00_new}-\eqref{con5_0_new} is used with $\alpha(\bm{x})=(x+y)^2 v(\bm{x})$, the degree is 6. Furthermore, if constraint \eqref{con4_00_new}-\eqref{con5_0_new} is used with $\alpha(\bm{x})=x^4 v(\bm{x})$, the degree is 4.

Analogously, we also experimented using constraint \eqref{con3110}-\eqref{con511} and \eqref{con4_000}-\eqref{con7_00} with $\beta=10^{-2},10^{-3},10^{-4},10^{-5},10^{-6}$ for more comprehensive and fair comparisons. The computations terminate when the degree is $10$ for all of these experiments. 

\begin{figure}[htb!]
\center
\includegraphics[width=2.5in,height=2.0in]{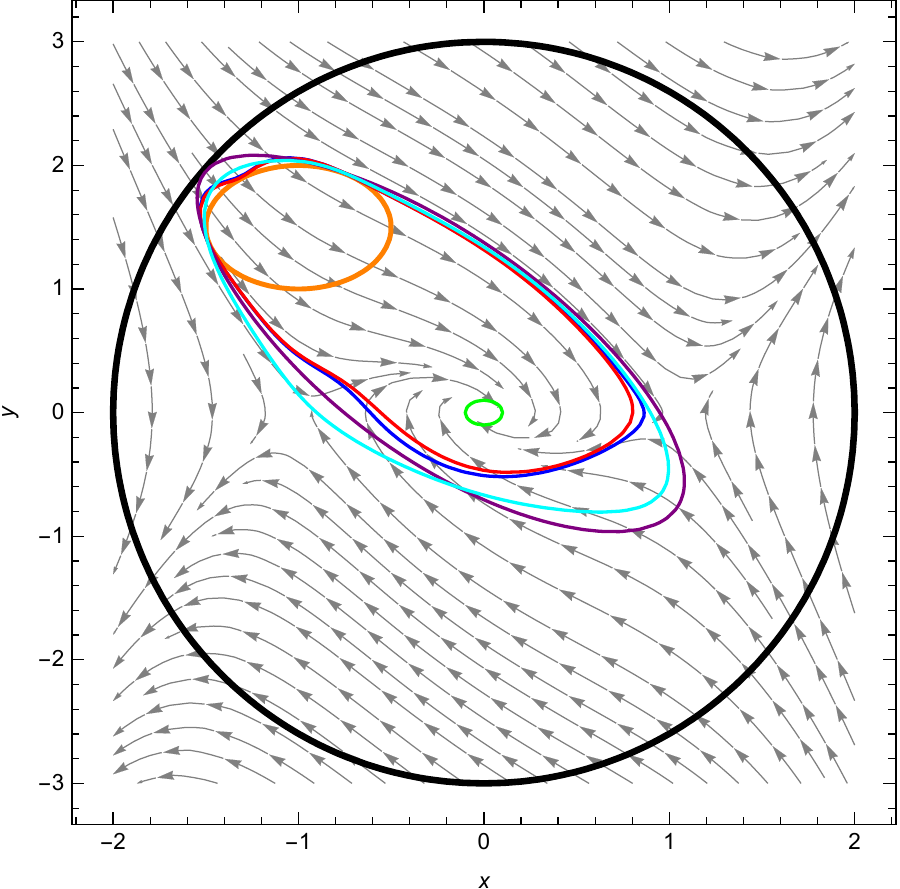} 
\caption{Green, black and orange curve- $\partial \mathcal{X}_r$, $\partial \mathcal{X}$ and $\partial \mathcal{X}_0$; blue, red, cyan and purple curve - $\partial \mathcal{R}$, which is computed via solving constraints \eqref{con4_00}-\eqref{con5_0} when the degree is $10$, \eqref{con4_000}-\eqref{con7_00} when the degree is $10$ and $\beta=1$, \eqref{con4_00_new}-\eqref{con5_0_new} when the degree is $6$ and $\alpha(\bm{x})=(x+y)^2 v(\bm{x})$, and\eqref{con4_00_new}-\eqref{con5_0_new} when the degree is $4$ and $\alpha(\bm{x})=x^4 v(\bm{x})$.}
\label{fig_3}
\end{figure}

\end{example}

\begin{example}[Dubin's Car]
\label{dubin}
Consider the Dubin's car, a multi-input system: $\dot{a}=v\cos(\theta)$, $\dot{b}=v\sin(\theta)$, $\dot{\theta}=\omega$. By the change of variables, $x=\theta$, $y=a\cos(\theta)+b\sin(\theta)$, $z=-2(a\sin(\theta)-b\cos(\theta))+\theta y$, and $u_1=\omega$, $u_2=v-\omega(a\sin(\theta)-b\cos(\theta))$, it is transformed into polynomial dynamics:
\begin{equation}
\begin{cases}
       &\dot{x}=u_1,\\
       &\dot{y}=u_2,\\
       &\dot{z}=y u_1 -x u_2.
    \end{cases}
\end{equation}
with $u_1=2,u_2=1+z-xy$, $\mathcal{X}=\{\bm{x}\in \mathbb{R}^3\mid x^2+y^2+z^2-4<0\}$, $\mathcal{X}_0=\{\bm{x}\in \mathbb{R}^3\mid (x + 0.6)^2 + y^2 + (z + 0.6)^2 - 0.02<0\}$ and $\mathcal{X}_r=\{\bm{x}\in \mathbb{R}^3\mid x^2+y^2+z^2-4<0,(x - 1.0)^2 - (y + 0.5)^2 + (z+0.1)^2 - 0.1<0\}$. 

The reach-avoid property is verified when degree is 8 for all of constraints \eqref{041}-\eqref{043}, \eqref{con4_00}-\eqref{con5_0}, \eqref{con3110}-\eqref{con511} with $\beta\in \{1,0.1,\ldots,10^{-6}\}$, \eqref{con4_000}-\eqref{con7_00} with $\beta\in \{1,0.1,\ldots,10^{-6}\}$. However, if constraint\eqref{con4_00_new}-\eqref{con5_0_new} is used with $\alpha(\bm{x})=(1-x)^2v(\bm{x})$, the degree is 6.   
\end{example}

 Examples above, i.e., Example \ref{ex1}-\ref{dubin}, indicate that when the discount factor is small, constraint \eqref{con3110}-\eqref{con511} has the same performance with constraint \eqref{con4_00}-\eqref{con5_0}, although it performs worse when the discount factor is large. On the other hand, constraint \eqref{con4_000}-\eqref{con7_00} indeed is able to improve the performance of constraint \eqref{con3110}-\eqref{con511} when the discount factor is large, but it does not improve constraint \eqref{con4_00}-\eqref{con5_0} and its performance will be the same with constraint \eqref{con3110}-\eqref{con511} when the discount factor is small. However, constraint \eqref{con4_00_new}-\eqref{con5_0_new} outperforms the former three, i.e., constraints \eqref{con3110}-\eqref{con511}, \eqref{con4_00}-\eqref{con5_0} and \eqref{con4_000}-\eqref{con7_00}, and constraint \eqref{041}-\eqref{043}. It is indeed more expressive and has more feasible solutions, providing more chances for verifying the reach-avoid property in the sense of Definition \ref{RNS} successfully. Besides, from Example \ref{ex7} we observe that the performance of constraint\eqref{con4_00_new}-\eqref{con5_0_new} is affected by the choice of the function $\alpha(\bm{x})$, and an appropriate choice will be more conducive to the reach-avoid verification. However, how to determine such a function in an optimal sense is still an open problem, which will be investigated in the future work. In practice, engineering experiences and insights may facilitate the choice. 
 
In the present work we only demonstrate the performance of all of quantified constraints by relaxing them into semi-definite constraints and addressing them within the semi-definite programming framework, which could be solved efficiently via interior point methods in polynomial time. It is worth remarking here that besides semi-definite programs for implementing these constraints, other methods such as counterexample-guided inductive synthesis methods combining machine learning and SMT solving techniques (e.g., \cite{abate2021fossil}) can also be used to solve these constraints. We did not show their performance in this present work and leave these investigations for ones of interest.

\section{Conclusion and Future Work}
\label{con}
In this paper we studied the reach-avoid verification problem of continuous-time systems within the framework of optimization based methods. At the beginning of our method, two sets of quantified inequalities were derived respectively based on a discount value function with the discount factor being larger than zero and equal to zero, such that the reach-avoid verification problem is transformed into a problem of searching for exponential/asymptotic guidance-barrier functions. The set of constraints associated with asymptotic guidance-barrier functions is completely novel and has certain benefits over the other one associated with exponential guidance-barrier functions, which is a simplified version of the one in existing literature. Furthermore, we enhanced the new set of constraints such that it is more expressive than the aforementioned two sets of constraints, providing more chances to verify the satisfaction of reach-avoid properties successfully. When the datum involved are polynomials, i.e., the initial set, safe set and target set are semi-algebraic, and the system has polynomial dynamics, the problem of solving these sets of constraints can be efficiently addressed using convex optimization. Finally, several examples demonstrated the theoretical developments and benefits of the proposed approaches.

Like the extension of barrier functions for safety verification to control barrier functions for synthesizing safe controllers \cite{ames2016control}, in the future work we will extend exponential/asymptotic guidance-barrier functions to control exponential/asymptotic guidance-barrier functions for synthesizing provably reach-avoid controllers.





\bibliographystyle{abbrv}
\bibliography{reference}

\begin{thebibliography}{10}

\bibitem{abate2021fossil}
A.~Abate, D.~Ahmed, A.~Edwards, M.~Giacobbe, and A.~Peruffo.
\newblock Fossil: a software tool for the formal synthesis of lyapunov
  functions and barrier certificates using neural networks.
\newblock In {\em Proceedings of the 24th International Conference on Hybrid
  Systems: Computation and Control}, pages 1--11, 2021.

\bibitem{althoff2013reachability}
M.~Althoff.
\newblock Reachability analysis of nonlinear systems using conservative
  polynomialization and non-convex sets.
\newblock In {\em Proceedings of the 16th international conference on Hybrid
  systems: computation and control}, pages 173--182, 2013.

\bibitem{althoff2021set}
M.~Althoff, G.~Frehse, and A.~Girard.
\newblock Set propagation techniques for reachability analysis.
\newblock {\em Annual Review of Control, Robotics, and Autonomous Systems},
  4(1), 2021.

\bibitem{ames2019control}
A.~D. Ames, S.~Coogan, M.~Egerstedt, G.~Notomista, K.~Sreenath, and P.~Tabuada.
\newblock Control barrier functions: Theory and applications.
\newblock In {\em 2019 18th European control conference (ECC)}, pages
  3420--3431. IEEE, 2019.

\bibitem{ames2016control}
A.~D. Ames, X.~Xu, J.~W. Grizzle, and P.~Tabuada.
\newblock Control barrier function based quadratic programs for safety critical
  systems.
\newblock {\em IEEE Transactions on Automatic Control}, 62(8):3861--3876, 2016.

\bibitem{asarin2000}
E.~Asarin, O.~Bournez, T.~Dang, and O.~Maler.
\newblock Approximate reachability analysis of piecewise-linear dynamical
  systems.
\newblock In {\em HSCC'00}, pages 20--31. Springer, 2000.

\bibitem{aubin2001viability}
J.-P. Aubin.
\newblock Viability kernels and capture basins of sets under differential
  inclusions.
\newblock {\em SIAM Journal on Control and Optimization}, 40(3):853--881, 2001.

\bibitem{bak2018t}
S.~Bak.
\newblock t-barrier certificates: a continuous analogy to k-induction.
\newblock {\em IFAC-PapersOnLine}, 51(16):145--150, 2018.

\bibitem{berz1998verified}
M.~Berz and K.~Makino.
\newblock Verified integration of odes and flows using differential algebraic
  methods on high-order taylor models.
\newblock {\em Reliable computing}, 4(4):361--369, 1998.

\bibitem{bokanowski2010}
O.~Bokanowski, N.~Forcadel, and H.~Zidani.
\newblock Reachability and minimal times for state constrained nonlinear
  problems without any controllability assumption.
\newblock {\em SIAM Journal on Control and Optimization}, 48(7):4292--4316,
  2010.

\bibitem{braunl2008}
T.~Br{\"a}unl.
\newblock {\em Embedded Robotics: Mobile Robot Design and Applications with
  Embedded systems}.
\newblock Springer Science \& Business Media, 2008.

\bibitem{chen2012taylor}
X.~Chen, E.~Abraham, and S.~Sankaranarayanan.
\newblock Taylor model flowpipe construction for non-linear hybrid systems.
\newblock In {\em 2012 IEEE 33rd Real-Time Systems Symposium}, pages 183--192.
  IEEE, 2012.

\bibitem{dai2017barrier}
L.~Dai, T.~Gan, B.~Xia, and N.~Zhan.
\newblock Barrier certificates revisited.
\newblock {\em Journal of Symbolic Computation}, 80:62--86, 2017.

\bibitem{eggers2015improving}
A.~Eggers, N.~Ramdani, N.~S. Nedialkov, and M.~Fr{\"a}nzle.
\newblock Improving the sat modulo ode approach to hybrid systems analysis by
  combining different enclosure methods.
\newblock {\em Software \& Systems Modeling}, 14(1):121--148, 2015.

\bibitem{fisac2015}
J.~F. Fisac, M.~Chen, C.~J. Tomlin, and S.~S. Sastry.
\newblock Reach-avoid problems with time-varying dynamics, targets and
  constraints.
\newblock In {\em HSCC'15}, pages 11--20, 2015.

\bibitem{fisac2015pursuit}
J.~F. Fisac and S.~S. Sastry.
\newblock The pursuit-evasion-defense differential game in dynamic constrained
  environments.
\newblock In {\em CDC'15}, pages 4549--4556. IEEE, 2015.

\bibitem{franzle2019memory}
M.~Fr{\"a}nzle, M.~Chen, and P.~Kr{\"o}ger.
\newblock In memory of oded maler: automatic reachability analysis of
  hybrid-state automata.
\newblock {\em ACM SIGLOG News}, 6(1):19--39, 2019.

\bibitem{Zhan18}
T.~Gan, M.~Chen, Y.~Li, B.~Xia, and N.~Zhan.
\newblock Reachability analysis for solvable dynamical systems.
\newblock {\em IEEE Trans. Automat. Contr.}, 63(7):2003--2018, 2018.

\bibitem{gillula2012}
J.~H. Gillula and C.~J. Tomlin.
\newblock Guaranteed safe online learning via reachability: tracking a ground
  target using a quadrotor.
\newblock In {\em ICRA'12}, pages 2723--2730. IEEE, 2012.

\bibitem{henrion2013convex}
D.~Henrion and M.~Korda.
\newblock Convex computation of the region of attraction of polynomial control
  systems.
\newblock {\em IEEE Trans. Automat. Contr.}, 59(2):297--312, 2013.

\bibitem{Henzinger98}
T.~A. Henzinger, P.~W. Kopke, A.~Puri, and P.~Varaiya.
\newblock What's decidable about hybrid automata?
\newblock {\em J. Comput. Syst. Sci.}, 57(1):94--124, 1998.

\bibitem{khalil2002nonlinear}
H.~K. Khalil.
\newblock Nonlinear systems third edition.
\newblock {\em Patience Hall}, 115, 2002.

\bibitem{kong2013exponential}
H.~Kong, F.~He, X.~Song, W.~N. Hung, and M.~Gu.
\newblock Exponential-condition-based barrier certificate generation for safety
  verification of hybrid systems.
\newblock In {\em International Conference on Computer Aided Verification},
  pages 242--257. Springer, 2013.

\bibitem{korda2013inner}
M.~Korda, D.~Henrion, and C.~N. Jones.
\newblock Inner approximations of the region of attraction for polynomial
  dynamical systems.
\newblock {\em IFAC Proceedings Volumes}, 46(23):534--539, 2013.

\bibitem{korda2014controller}
M.~Korda, D.~Henrion, and C.~N. Jones.
\newblock Controller design and region of attraction estimation for nonlinear
  dynamical systems.
\newblock {\em IFAC Proceedings Volumes}, 47(3):2310--2316, 2014.

\bibitem{kulisch2001perspectives}
U.~Kulisch, R.~Lohner, and A.~Facius.
\newblock {\em Perspectives on Enclosure Methods}.
\newblock Springer Science \& Business Media, 2001.

\bibitem{lee2008cyber}
E.~A. Lee.
\newblock Cyber physical systems: Design challenges.
\newblock In {\em ISORC'18}, pages 363--369. IEEE, 2008.

\bibitem{lofberg2004}
J.~Lofberg.
\newblock Yalmip: A toolbox for modeling and optimization in matlab.
\newblock In {\em CACSD'04}, pages 284--289. IEEE, 2004.

\bibitem{majumdar2014}
A.~Majumdar, R.~Vasudevan, M.~M. Tobenkin, and R.~Tedrake.
\newblock Convex optimization of nonlinear feedback controllers via occupation
  measures.
\newblock {\em The International Journal of Robotics Research},
  33(9):1209--1230, 2014.

\bibitem{margellos2011}
K.~Margellos and J.~Lygeros.
\newblock Hamilton--jacobi formulation for reach--avoid differential games.
\newblock {\em IEEE Trans. Automat. Contr.}, 56(8):1849--1861, 2011.

\bibitem{margellos2012}
K.~Margellos and J.~Lygeros.
\newblock Toward 4-d trajectory management in air traffic control: a study
  based on monte carlo simulation and reachability analysis.
\newblock {\em IEEE Transactions on Control Systems Technology},
  21(5):1820--1833, 2012.

\bibitem{mitchell2000level}
I.~Mitchell and C.~J. Tomlin.
\newblock Level set methods for computation in hybrid systems.
\newblock In {\em HSCC'00}, pages 310--323. Springer, 2000.

\bibitem{mosek2015mosek}
A.~Mosek.
\newblock The mosek optimization toolbox for matlab manual, 2015.

\bibitem{prajna2004safety}
S.~Prajna and A.~Jadbabaie.
\newblock Safety verification of hybrid systems using barrier certificates.
\newblock In {\em HSCC'04}, pages 477--492. Springer, 2004.

\bibitem{prajna2007framework}
S.~Prajna, A.~Jadbabaie, and G.~J. Pappas.
\newblock A framework for worst-case and stochastic safety verification using
  barrier certificates.
\newblock {\em IEEE Transactions on Automatic Control}, 52(8):1415--1428, 2007.

\bibitem{prajna2007convex}
S.~Prajna and A.~Rantzer.
\newblock Convex programs for temporal verification of nonlinear dynamical
  systems.
\newblock {\em SIAM Journal on Control and Optimization}, 46(3):999--1021,
  2007.

\bibitem{ramdani2009hybrid}
N.~Ramdani, N.~Meslem, and Y.~Candau.
\newblock A hybrid bounding method for computing an over-approximation for the
  reachable set of uncertain nonlinear systems.
\newblock {\em IEEE Transactions on automatic control}, 54(10):2352--2364,
  2009.

\bibitem{shia2014convex}
V.~Shia, R.~Vasudevan, R.~Bajcsy, and R.~Tedrake.
\newblock Convex computation of the reachable set for controlled polynomial
  hybrid systems.
\newblock In {\em CDC'14}, pages 1499--1506. IEEE, 2014.

\bibitem{sogokon2018vector}
A.~Sogokon, K.~Ghorbal, Y.~K. Tan, and A.~Platzer.
\newblock Vector barrier certificates and comparison systems.
\newblock In {\em International Symposium on Formal Methods}, pages 418--437.
  Springer, 2018.

\bibitem{tan2008stability}
W.~Tan and A.~Packard.
\newblock Stability region analysis using polynomial and composite polynomial
  lyapunov functions and sum-of-squares programming.
\newblock {\em IEEE Trans. Automat. Contr.}, 53(2):565--571, 2008.

\bibitem{williams2003}
B.~C. Williams, M.~D. Ingham, S.~H. Chung, and P.~H. Elliott.
\newblock Model-based programming of intelligent embedded systems and robotic
  space explorers.
\newblock {\em Proceedings of the IEEE}, 91(1):212--237, 2003.

\bibitem{xue2016reach}
B.~Xue, A.~Easwaran, N.-J. Cho, and M.~Fr{\"a}nzle.
\newblock Reach-avoid verification for nonlinear systems based on boundary
  analysis.
\newblock {\em IEEE Trans. Automat. Contr.}, 62(7):3518--3523, 2016.

\bibitem{xue2019}
B.~Xue, M.~Fr\"anzle, and N.~Zhan.
\newblock Inner-approximating reachable sets for polynomial systems with
  time-varying uncertainties.
\newblock {\em IEEE Trans. Automat. Contr.}, 65(4):1468--1483, 2020.

\bibitem{xue2020inner}
B.~Xue, N.~Zhan, and M.~Fr{\"a}nzle.
\newblock Inner-approximating reach-avoid sets for discrete-time polynomial
  systems.
\newblock In {\em CDC'20}, pages 867--873. IEEE, 2020.

\bibitem{zhao2017control}
P.~Zhao, S.~Mohan, and R.~Vasudevan.
\newblock Control synthesis for nonlinear optimal control via convex
  relaxations.
\newblock In {\em ACC'17}, pages 2654--2661. IEEE, 2017.

\end{thebibliography}
\section{Appendix}

\begin{algorithm}
The semi-definite program for solving constraints \eqref{041}-\eqref{043}:
\begin{equation*}
\begin{split}
&-v(\bm{x})+\sum_{i=1}^{k_1}s_{0,i}(\bm{x})l_i(\bm{x}) \in \sum[\bm{x}],\\
&-\bigtriangledown_{\bm{x}} v(\bm{x})\cdot\bm{f}(\bm{x})-\epsilon+s_{1,j}(\bm{x}) h(\bm{x})-s_{2,j}(\bm{x}) g_j(\bm{x})\in \sum[\bm{x}],\\
&v(\bm{x})-\epsilon-q(\bm{x}) h(\bm{x})\in \sum[\bm{x}],\\
& j=1,\ldots,j_1,
\end{split}
\end{equation*}
where $v(\bm{x})$, $q(\bm{x})\in \mathbb{R}[\bm{x}]$, and $s_{0,i}(\bm{x})\in \sum[\bm{x}]$, $i=0,\ldots,k_1$, $s_{1,j}, s_{2,j}\in \sum[\bm{x}]$, $j=1,\ldots,j_1$, and $\epsilon>0$ is an user-defined value enforcing the strict positiveness of $v(\bm{x})$ and $-\bigtriangledown_{\bm{x}} v(\bm{x})\cdot\bm{f}(\bm{x})$. In the computations of this paper we take $\epsilon=10^{-6}$.
\end{algorithm}

\begin{algorithm}

The semi-definite program for solving constraints \eqref{con3110}-\eqref{con511}: 
\begin{equation*}
\begin{split}
&v(\bm{x})-\epsilon+\sum_{i=1}^{k_1}s_{0,i}(\bm{x})l_i(\bm{x}) \in \sum[\bm{x}],\\
&\bigtriangledown_{\bm{x}} v(\bm{x})\cdot\bm{f}(\bm{x})-\beta v(\bm{x})+s_{1,j}(\bm{x}) h(\bm{x})-s_{2,j}(\bm{x}) g_j(\bm{x})\in \sum[\bm{x}],\\
&-v(\bm{x})-p(\bm{x}) h(\bm{x})\in \sum[\bm{x}],\\
&j=1,\ldots,j_1,
\end{split}
\end{equation*}
where $v(\bm{x}), q(\bm{x})\in \mathbb{R}[\bm{x}]$, and $s_{0,i}(\bm{x})\in \sum[\bm{x}]$, $i=0,\ldots,k_1$, $s_{1,j}, s_{2,j}\in \sum[\bm{x}]$, $j=1,\ldots,j_1$, and $\epsilon>0$ is an user-defined value enforcing the strict positiveness of $v(\bm{x})$. In the computations of this paper we take $\epsilon=10^{-6}$.
\end{algorithm}

\begin{algorithm}
The semi-definite program for solving constraints \eqref{con4_00}-\eqref{con5_0}:
\begin{equation*}
\begin{split}
&v(\bm{x})-\epsilon+\sum_{i=1}^{k_1}s_{0,i}(\bm{x})l_i(\bm{x}) \in \sum[\bm{x}], \\
&\bigtriangledown_{\bm{x}} v(\bm{x})\cdot\bm{f}(\bm{x})+s_{1,j}(\bm{x}) h(\bm{x})-s_{2,j}(\bm{x}) g_j(\bm{x})\in \sum[\bm{x}],\\
&\bigtriangledown_{\bm{x}} w(\bm{x})\cdot\bm{f}(\bm{x})-v(\bm{x})+s_{3,j}(\bm{x}) h(\bm{x})-s_{4,j}(\bm{x}) g_j(\bm{x})\in \sum[\bm{x}],\\
&-v(\bm{x})-p(\bm{x}) h(\bm{x}) \in \sum[\bm{x}],\\
&j=1,\ldots,j_1,\\
\end{split}
\end{equation*}
where $v(\bm{x}), w(\bm{x}), p(\bm{x})\in \mathbb{R}[\bm{x}]$, and $s_{0,i}(\bm{x})\in \sum[\bm{x}]$, $i=1,\ldots,k_1$, and $s_{i,j} \in \sum[\bm{x}]$, $i=1,\ldots,4, j=1,\ldots,j_1$, $\epsilon>0$ is an user-defined value enforcing the strict positiveness of $v(\bm{x})$ over the initial set $\mathcal{X}_0$. In the computations of this paper we take $\epsilon=10^{-6}$.
\end{algorithm}

\begin{algorithm}
The semi-definite program for solving constraints \eqref{con4_000}-\eqref{con7_00}:
\begin{equation*}
\begin{split}
&v_1(\bm{x})+v_2(\bm{x})-\epsilon+\sum_{i=1}^{k_1}s_{0,i}(\bm{x})l_i(\bm{x}) \in \sum[\bm{x}], \\
&\bigtriangledown_{\bm{x}} v_1(\bm{x})\cdot\bm{f}(\bm{x})+s_{1,j}(\bm{x}) h(\bm{x})-s_{2,j}(\bm{x}) g_j(\bm{x})\in \sum[\bm{x}],\\
&\bigtriangledown_{\bm{x}} w(\bm{x})\cdot\bm{f}(\bm{x})-v_1(\bm{x})+s_{3,j}(\bm{x}) h(\bm{x})-s_{4,j}(\bm{x}) g_j(\bm{x})\in \sum[\bm{x}],\\
&\bigtriangledown_{\bm{x}} v_2(\bm{x})\cdot\bm{f}(\bm{x})-\beta v_2(\bm{x})+s_{5,j}(\bm{x}) h(\bm{x})-s_{6,j}(\bm{x}) g_j(\bm{x})\in \sum[\bm{x}],\\
&-v_1(\bm{x})-p(\bm{x}) h(\bm{x}) \in \sum[\bm{x}],\\
&-v_2(\bm{x})-p(\bm{x}) h(\bm{x})\in \sum[\bm{x}],\\
&j=1,\ldots,j_1,\\
\end{split}
\end{equation*}
where $v(\bm{x}), w(\bm{x}), p(\bm{x})\in \mathbb{R}[\bm{x}]$, and $s_{0,i}(\bm{x})\in \sum[\bm{x}]$, $i=1,\ldots,k_1$, and $s_{i,j} \in \sum[\bm{x}]$, $i=1,\ldots,6, j=1,\ldots,j_1$, $\epsilon>0$ is an user-defined value enforcing the strict positiveness of $v(\bm{x})$ over the initial set $\mathcal{X}_0$. In the computations of this paper we take $\epsilon=10^{-6}$.
\end{algorithm}

\begin{algorithm}
The semi-definite program for solving constraints \eqref{con4_00_new}-\eqref{con5_0_new}:
\begin{equation*}
\begin{split}
&v(\bm{x})-\epsilon+\sum_{i=1}^{k_1}s_{0,i}(\bm{x})l_i(\bm{x}) \in \sum[\bm{x}], \\
&\bigtriangledown_{\bm{x}} v(\bm{x})\cdot\bm{f}(\bm{x})-\alpha(\bm{x})+s_{1,j}(\bm{x}) h(\bm{x})-s_{2,j}(\bm{x}) g_j(\bm{x})\in \sum[\bm{x}],\\
&\bigtriangledown_{\bm{x}} w(\bm{x})\cdot\bm{f}(\bm{x})-v(\bm{x})+s_{3,j}(\bm{x}) h(\bm{x})-s_{4,j}(\bm{x}) g_j(\bm{x})\in \sum[\bm{x}],\\
&-v(\bm{x})-p(\bm{x}) h(\bm{x}) \in \sum[\bm{x}],\\
&j=1,\ldots,j_1,\\
\end{split}
\end{equation*}
where $v(\bm{x}), w(\bm{x}), p(\bm{x})\in \mathbb{R}[\bm{x}]$, and $s_{0,i}(\bm{x})\in \sum[\bm{x}]$, $i=1,\ldots,k_1$, and $s_{i,j} \in \sum[\bm{x}]$, $i=1,\ldots,4, j=1,\ldots,j_1$, $\epsilon>0$ is an user-defined value enforcing the strict positiveness of $v(\bm{x})$ over the initial set $\mathcal{X}_0$. In the computations of this paper we take $\epsilon=10^{-6}$.
\end{algorithm}

\clearpage

\begin{IEEEbiography}[{\includegraphics[width=1in,height=1.25in,clip,keepaspectratio]{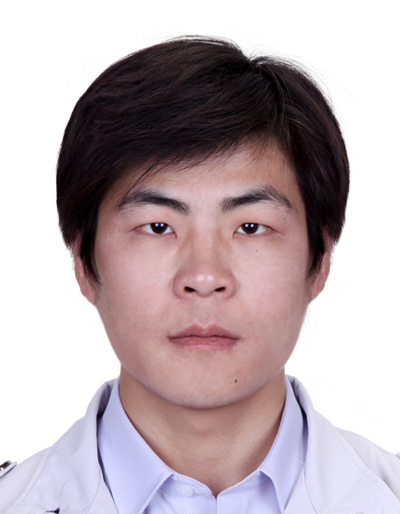}}]{Bai Xue}
received the B.Sc. degree in information and computing science from Tianjin University of Technology and Education, Tianjin, China, in 2008, and the Ph.D. degree in applied mathematics from Beihang University, Beijing, China,
in 2014.

He is currently a Research Professor with the Institute of Software, Chinese Academy of Sciences, Beijing, China. Prior to joining the Institute of Software, he worked as a Research Fellow with the Centre for High Performance Embedded Systems, Nanyang Technological University, from May, 2014 to September, 2015, and as a postdoctoral with the Department f\"ur Informatik, Carl von Ossietzky Universit\"at Oldenburg, from November, 2015 to October, 2017. His research interests involve formal verification of (stochastic/time-delay)
hybrid systems and AI.
\end{IEEEbiography}

\begin{IEEEbiography}[{\includegraphics[width=1in,height=1.25in,clip,keepaspectratio]{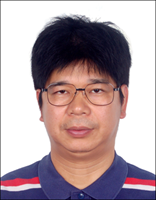}}]{Naijun Zhan}
 received the B.Sc. degree in mathematics and the M.Sc. degree in computer science from Nanjing University, Nanjing, China, in
1993 and 1996, respectively, and the Ph.D. degree in computer science from the Institute of Software, Chinese Academy of Sciences, Beijing, China, in 2000.

He was with the Faculty of Mathematics and Information, University of Mannheim,
Mannheim, Germany, as a research fellow, from
2001 to 2004. Since then, he joined the Institute
of Software, Chinese Academy of Sciences, Beijing, China, as an Associate Research Professor, and later was promoted to be a Full Research
Professor in 2008, and a Distinguished Professor in 2015. His research
interests include real-time, embedded and hybrid systems, program verification, concurrent computation models, and modal and temporal logics.
\end{IEEEbiography}
\begin{IEEEbiography}[{\includegraphics[width=1in,height=1.25in,clip,keepaspectratio]{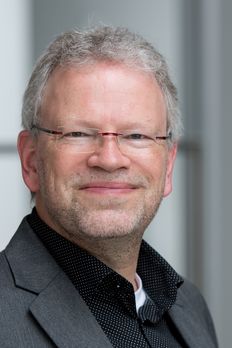}}]{Martin Fr\"anzle}
received the B.Sc. and M.Sc.
degrees in informatics from the Department
of Informatics at Christian-Albrechts-Universitat¨
Kiel, Germany, in 1986 and 1991, respectively,
and the Ph.D. degree in nature sciences from
the Technical Faculty of the Christian-AlbrechtsUniversitat Kiel, Germany, in 1997. 
He is currently a Full Professor for Foundations and Applications of Cyber-Physical Systems with the Department of Computing Science, Carl von Ossietzky Universität Oldenburg, Germany. At his university as well as at international Ph.D. schools,
he teaches in the areas of formal methods for embedded system design,
the theory and application of hybrid discrete-continuous systems, design
principles of autonomous systems, as well as related subjects.

His research interests include modeling, verification, and synthesis of
reactive, real-time, hybrid, and human-cyber-physical systems, as well
as applications in advanced driver assistance, highly automated cars
and autonomous driving, and power networks.
\end{IEEEbiography}

\begin{IEEEbiography}[{\includegraphics[width=1in,height=1.25in,clip,keepaspectratio]{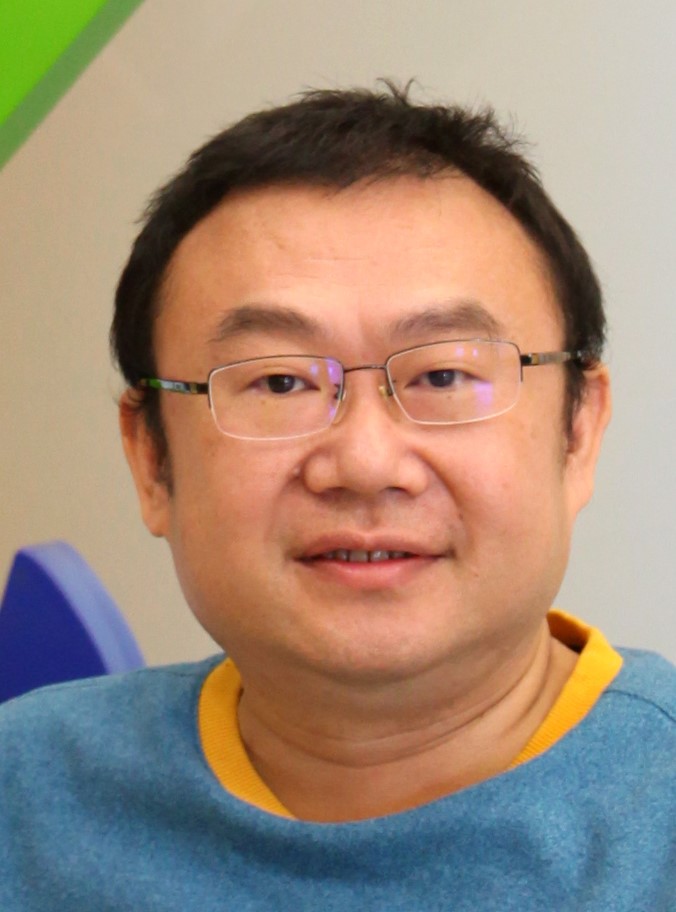}}]{Ji Wang}
 received his Ph.D. degree in computer science from National University of Defense Technology, China in 1995. He is currently a professor in State Key Laboratory of High Performance Computing and College of Computer Science and Technology at National University of Defense Technology, China. 
 
 His research interests include programming methodology, program analysis and verification, formal methods and software engineering.
\end{IEEEbiography}

\begin{IEEEbiography}[{\includegraphics[width=1in,height=1.25in,clip,keepaspectratio]{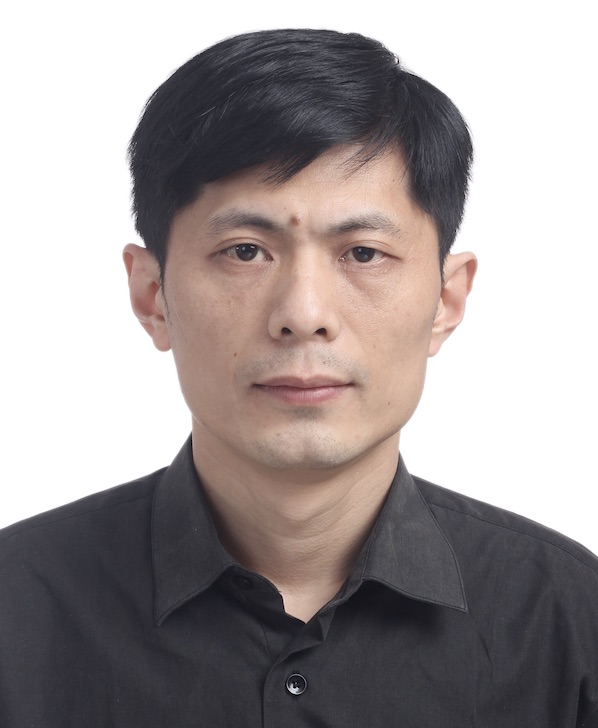}}]{Wanwei Liu} received his Ph.D degree in National University of Defense Technology in 2009, he is a full professor in College of Computer Science, National University of Defense Technology. He is a senior member of CCF.

His research interests includes theoretical computer science (particularly in automata theory and temporal logic), formal methods (particularly in verification), and software engineering.
\end{IEEEbiography}

\end{document}